\numberwithin{equation}{section}
\newcommand{\A}{\mathbb{A}}
\newcommand{\G}{\mathbb{G}}
\newcommand{\Z}{\mathbb{Z}}
\newcommand{\Ker}{\operatorname{Ker}}
\newcommand{\Spec}{\operatorname{Spec}}
\newcommand{\EXP}{\operatorname{EXP}}
\newcommand{\ML}{\operatorname{ML}}
\newcommand{\AK}{\operatorname{AK}}
\newcommand{\trdeg}{{\rm tr.deg}\:}
\newcommand{\ch}{{\rm char}\:}
\newcommand{\lc}{{\rm lc}\:}
\newtheorem{thm}{Theorem}[section]
\newtheorem{prop}[thm]{Proposition}
\newtheorem{lem}[thm]{Lemma}
\newtheorem{cor}[thm]{Corollary}
\newtheorem{defn}[thm]{Definition}
\newtheorem{example}[thm]{Example}
\newtheorem*{problem1}{Problem 1}
\newtheorem*{claim1}{Claim 1}
\newtheorem*{claim2}{Claim 2}
\newtheorem*{claim3}{Claim 3}
\begin{document}
\title[Remarks on retracts of polynomial rings in three variables]{Remarks on retracts of polynomial rings in three variables in any characteristic}
\author{Hideo Kojima}
\address[H. Kojima]{Department of Mathematics, Faculty of Science, Niigata University, 8050 Ikarashininocho, Nishi-ku, Niigata 950-2181, Japan}
\email{kojima@math.sc.niigata-u.ac.jp}
\author{Takanori Nagamine}
\address[T. Nagamine]{Department of Mathematics, College of Science and Technology, 
Nihon University, 1-8-14 Kanda-Surugadai, Chiyoda-ku, Tokyo 101-8308, Japan}
\email{nagamine.takanori@nihon-u.ac.jp}
\author{Riko Sasagawa}
\address[R. Sasagawa]{Niigata Prefectural Tokamachi Senior High School, 1-203 Honcho-Nishi, Tokamachi-shi, Niigata, 948-0083, Japan}
\subjclass[2020]{Primary 13B25; Secondary 13N15, 13A50, 14R20}
\thanks{Research of the first author is supported by JSPS KAKENHI Grant Number JP 21K03200, and the second author by JSPS KAKENHI Grant Number JP 21K13782.}
\keywords{Polynomial rings, Exponential maps, Retracts.}

\begin{abstract}
Let $A$ be a retract of the polynomial ring in three variables over a field $k$. It is known that if $\ch(k) = 0$ or $\trdeg_k A \not= 2$ then $A$ is a polynomial ring. In this paper, we give some sufficient conditions for $A$ to be the polynomial ring in two variables over $k$ when $\ch(k) > 0$ and $\trdeg_k A = 2$. 
\end{abstract}
\maketitle

\setcounter{section}{0}

\section{Introduction}

Throughout the paper, $k$ denotes a field. For an integral domain $R$ and a positive integer $n$, $R^{[n]}$ denotes the polynomial ring in $n$ variables over $R$. $Q(R)$ denotes the quotient field of $R$.

Let $B$ be a $k$-algebra and $A$ its $k$-subalgebra. We say that $A$ is a \textit{retract} of $B$ if there exists an ideal $I$ of $B$ such that $B = A \oplus I$, where the sum is $A$-direct (cf.\ Definition 2.1). 
Several mathematicians have studied retracts of $k$-domains. See, e.g., Costa \cite{C77}, Spilrain--Yu \cite{SY00}, Liu--Sun \cite{LS18}, \cite{LS21}, the second author \cite{N19}, Chakraborty--Dasgupta--Dutta--Gupta \cite{CDDG21},  Gupta-the second author \cite{GN23}. In particular, the study of retracts of polynomial rings has been done because it is closely related to the Zariski cancellation problem.

In \cite{C77}, Costa posed the following problem.

\begin{problem1}
Is every retract of $k^{[n]}$ a polynomial ring?
\end{problem1}

We note that Problem 1 includes the Zariski cancellation problem. Indeed, if a $k$-algebra $A$ satisfies $A^{[1]} \cong k^{[n]}$ as $k$-algebras, then $A$ can be regarded as a retract of $k^{[n]}$. So, if Problem 1 has an affirmative answer for fixed $k$ and $n$, then we conclude that $A \cong k^{[n-1]}$. 

Let $A$ be a retract of $B:= k^{[n]}$. 
We have the following results on Problem 1.
\begin{itemize}
\item[(1)]
If $\trdeg_k A = 0$ (resp.\  $\trdeg_k A = n$), then $A = k$ (resp.\ $A = B$). 
\item[(2)]
(\cite[Theorem 3.5]{C77})\ \  If $\trdeg_k A = 1$, then $A \cong k^{[1]}$. In particular, if $n = 2$, then $A$ is a polynomial ring.
\item[(3)]
(\cite[Theorem 2.5]{N19}, \cite[Theorem 5.8]{CDDG21})\ \ If $\ch(k) = 0$ and $\trdeg_k A = 2$, then $A \cong k^{[2]}$. In particular, if $n = 3$ and $\ch(k) = 0$, then $A$ is a polynomial ring.
\item[(4)]
In the case $n \geq 3$ and $\ch(k) > 0$, Gupta \cite{G14-1}, \cite{G14-2} proved that                                       
the Zariski cancellation problem does not hold for the affine $n$-space $\A^n$. So, Problem 1 has counter examples when $n \geq 4$ and $\ch(k) > 0$. 
\end{itemize}
Therefore, Problem 1 remains open in the following cases:
\begin{itemize}
\item
$n = 3$ and $\ch (k) > 0$.
\item
$n \geq 4$ and $\ch (k) = 0$. 
\end{itemize}

Let $A$ be a retract of $k^{[3]}$ with $\trdeg_k A = 2$. When $\ch (k) = 0$, $A \cong k^{[2]}$ by \cite[Theorem 2.5]{N19} (or \cite[Theorem 5.8]{CDDG21}). Unfortunately, their proofs do not work in the case $\ch (k) > 0$. 
In fact, the second author \cite{N19} used the result \cite[Theorem 1.1 (a)]{K80} in the proof of \cite[Theorem 2.5]{N19}, which does not hold true when $\ch(k) > 0$.
In the proof of \cite[Theorem 5.8]{CDDG21}, the authors used \cite[Theorem 2.8]{CDDG21}, which cannot be used when $\ch (k) > 0$. 
Furthermore, they used the fact that $k^{[2]}$ over a field $k$ of characteristic zero has no non-trivial forms (cf.\ \cite[Theorem 3]{K75}), which does not hold true when $\ch(k) > 0$. 

In this paper, we study the case where $n = 3$ and $\ch (k) > 0$. In Section 2, we recall some results on retracts and exponential maps and their rings of constants on integral domains.
In Section 3, we give some sufficient conditions for a retract $A$ of $k^{[3]}$ to be a polynomial ring in two variables over $k$ when $\ch(k) > 0$ and $\trdeg_k A = 2$.  More precisely, we prove that if one of the following conditions holds, then $A \cong k^{[2]}$: 
\begin{enumerate}
\item $A$ is a ring of constants of some exponential map (cf.\ Corollary 3.2). 
\item $\ML(A) \not= A$, that is, there exists a non-trivial exponential map on $A$ (cf.\ Theorem 3.3). 
\item $k$ is algebraically closed and there exists a $\Z$-grading on $A$ with $A_0\not=A$  (cf.\ Theorem 3.7). 
\end{enumerate}
In Section 4, we give several examples of retracts of low-dimensional polynomial rings over $k$. These examples indicate they are polynomial rings. 

\section{Preliminary results}

Let $R$ be an integral domain. 
In this section, all rings we consider are $R$-domains. 

\begin{defn}
{\rm 
An $R$-subalgebra $A$ of an $R$-algebra $B$ is called a {\em retract} if it satisfies one of the following equivalent conditions:
\begin{itemize}
\item[(1)]
There exists an idempotent $R$-algebra endomorphism $\varphi$ of $B$, which is called a {\em retraction},
such that $\varphi(B) = A$.  
\item[(2)]
There exists an $R$-algebra homomorphism $\varphi: B \to A$ such that $\varphi|_{A} = {\rm id}_A$.
\item[(3)]
There exists an ideal $I$ of $B$ such that $B = A \oplus I$ as $A$-modules. 
\end{itemize}
}
\end{defn}

We recall some properties of retracts.

\begin{lem} \label{properties of retracts}
Let $B$ be an $R$-domain and $A$ a retract of $B$. Then the following assertions hold true.
\begin{itemize}
\item[(1)]
$A$ is algebraically closed in $B$, namely, every algebraic element of $B$ over $A$ belongs to $A$. 
So $Q(A) \cap B = A$. 
\item[(2)]
If $B$ is finitely generated over $R$, then so is $A$. 
\item[(3)]
If $B$ is a UFD, then so is $A$. 
\item[(4)]
If $B$ is regular, then so is $A$. 
\end{itemize}
\end{lem}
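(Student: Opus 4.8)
The plan is to fix, once and for all, a retraction $\varphi\colon B\to A$ with $\varphi|_{A}=\mathrm{id}_{A}$ and to set $I=\Ker\varphi$, so that $B=A\oplus I$ with $I$ an ideal of $B$ and $A\cong B/I$ as $R$-algebras. Every argument below is driven by the single identity $\varphi(a)=a$ for $a\in A$. Note first that $A$ is a domain, being a subring of the domain $B$.

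For (1), I take $b\in B$ algebraic over $A$ and choose a nonzero $f\in A[X]$ of least degree with $f(b)=0$, say $\deg f=n$; since no nonzero constant vanishes at $b$, we have $n\ge 1$. Applying $\varphi$ to $f(b)=0$ and using $\varphi|_{A}=\mathrm{id}_{A}$ gives $f(\varphi(b))=0$, so $a:=\varphi(b)\in A$ is a root of $f$. Division by the monic polynomial $X-a$ yields $f(X)=(X-a)g(X)$ with $g\in A[X]$ and $\deg g=n-1$; evaluating at $b$ gives $(b-a)g(b)=0$. As $B$ is a domain, either $b=a\in A$ or $g(b)=0$, and the latter contradicts the minimality of $n$. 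Hence $b\in A$, so $A$ is algebraically closed in $B$. The equality $Q(A)\cap B=A$ is then immediate, since any $x\in Q(A)\cap B$ satisfies $qx-p=0$ for some $p,q\in A$ with $q\neq 0$ and is therefore algebraic over $A$.

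For (2), I observe that $\varphi$ is a surjective $R$-algebra homomorphism with kernel $I$, so $A\cong B/I$ as $R$-algebras; a quotient of a finitely generated $R$-algebra is again finitely generated, which settles the claim.

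Parts (3) and (4) are the substantive ones, and the decisive elementary tool is a divisibility principle: for $a,a'\in A$ one has $a\mid a'$ in $A$ if and only if $a\mid a'$ in $B$. One direction is trivial, and for the converse I apply $\varphi$ to an equation $a'=ab$ $(b\in B)$ to get $a'=a\,\varphi(b)$ with $\varphi(b)\in A$; in particular $A^{\times}=A\cap B^{\times}$. For (3) the strategy is to verify Kaplansky's criterion, that an integral domain is a UFD precisely when every nonzero prime ideal contains a nonzero principal prime. Given a nonzero prime $\mathfrak q\subseteq A$, its preimage $\mathfrak Q:=\varphi^{-1}(\mathfrak q)=\mathfrak q\oplus I$ is a nonzero prime of $B$, and factoring a nonzero $a\in\mathfrak q$ in the UFD $B$ produces a prime $\pi$ of $B$ lying in $\mathfrak Q$ and dividing $a$, so that $(\pi)_{B}\cap A$ is a nonzero prime of $A$ contained in $\mathfrak q$. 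The main obstacle is exactly here: promoting this contracted prime to a \emph{principal} prime of $A$. The naive minimal-valuation argument fails, because divisibility in $B$ constrains all prime valuations simultaneously, so one must exploit the splitting more carefully, using the divisibility principle to descend factorizations from $B$ and to bound factorization lengths in $A$ by those in $B$; this is Costa's theorem, and I would follow \cite{C77}. For (4) I would argue locally: for $\mathfrak q\in\Spec A$ the localization $A_{\mathfrak q}$ is a retract of the regular local ring $B_{\mathfrak Q}$, regularity being equivalent to finite global dimension, which is inherited across the retraction $A_{\mathfrak q}\to B_{\mathfrak Q}\to A_{\mathfrak q}$ by a change-of-rings comparison; controlling the homological dimension through the splitting is the crux, and this too is due to Costa \cite{C77}.
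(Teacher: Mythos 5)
Your proposal is correct, and in substance it matches the paper, whose entire proof of this lemma consists of citations to Costa \cite{C77} (Lemma 1.3 for (1), Proposition 1.8 for (3), Corollary 1.11 for (4), with (2) declared obvious). You actually go further than the paper in two places: your minimal-degree argument for (1) --- applying the retraction $\varphi$ to $f(b)=0$ to obtain the root $\varphi(b)\in A$, dividing by the monic factor $X-\varphi(b)$, and invoking that $B$ is a domain --- is complete and is essentially Costa's own proof of his Lemma 1.3; and your proof of (2) via $A\cong B/\Ker\varphi$ is exactly the one-line argument the paper leaves unsaid. For (3) and (4) you correctly establish the standard first reductions (divisibility among elements of $A$ descends from $B$ by applying $\varphi$, hence $A^{\times}=A\cap B^{\times}$; localization of the retraction at $\mathfrak{q}$ and $\varphi^{-1}(\mathfrak{q})=\mathfrak{q}\oplus I$ is again a retraction of local rings), and you rightly identify where the real content lies --- promoting the contracted prime $(\pi)_{B}\cap A$ to a principal prime of $A$, respectively transferring finite global dimension across the splitting without flatness of $B$ over $A$. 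Deferring exactly these steps to \cite{C77} is legitimate here, since the paper itself proves nothing beyond the citation; the only caveat worth recording is that your Kaplansky-criterion framing of (3) is a reasonable target but not how the cited proof is organized, so parts (3) and (4) of your write-up should be read as reduction-plus-citation rather than as self-contained proofs.
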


\begin{proof}
(1)\ \ See \cite[Lemma 1.3]{C77}.

(2) \ \ Obvious. 

(3)\ \ See \cite[Proposition 1.8]{C77}. 

(4)\ \ See \cite[Corollary 1.11]{C77}. 
\end{proof}

Let $B$ be an $R$-domain and let $\sigma: B \to B^{[1]}$ be an $R$-algebra homomorphism from $B$ to $B^{[1]}$. For an indeterminate $U$ over $B$, let $\sigma_U$ denote the map $\sigma: B \to B[U]$. Then $\sigma$ is called an \textit{exponential map} on $B$ if the following conditions hold.
\begin{itemize}
\item[(i)]
$\epsilon_0 \circ \sigma_U  = {\rm id}_B$, where $\epsilon_0 : B[U] \to B$ is the evaluation at $U = 0$. 
\item[(ii)]
For indeterminates $U, \, V$, we have $\sigma_V \circ \sigma_U = \sigma_{U+V}$, where $\sigma_V: B \to B[V]$ is extended to an $R$-algebra homomorphism $\sigma_V : B[U] \to B[U,\, V] (= B^{[2]})$ by setting $\sigma_V(U) = U$. 
\end{itemize}
For an exponential map $\sigma : B \to B^{[1]}$, we denote the ring of constants of $\sigma$ by $B^{\sigma}$. Namely, $B^{\sigma} = \{ x \in B \ | \ \sigma(x) = x \} = \sigma^{-1}(B)$. 
The exponential map $\sigma$ is said to be \textit{trivial} if $B^{\sigma} = B$. 
We note that the conditions (i) and (ii) as above is equivalent to the condition that $\sigma$ is a co-action of the additive group scheme $\G_a = \Spec R[U]$ on $\Spec B$. If $R$ is a field of $\ch(R) = 0$, then an exponential map $\sigma: B \to B^{[1]}$ is corresponding to a locally nilpotent $R$-derivation $D: B \to B$. In particular, $B^{\sigma} = \Ker D$. 

For each $b \in B$, we define $\deg_{\sigma} (b)$ (resp.\ $\lc_{\sigma} (b)$) to be the degree of $\sigma(b)$ as a polynomial in the indeterminate over $B$ (resp.\ the leading coefficient of $\sigma (b)$). 

Assume that $\sigma$ is non-trivial, i.e., $B^{\sigma} \not= B$. We call $s \in B$ a {\em local slice} of $\sigma$ if $\deg_{\sigma} (s) = \min \{ \deg_{\sigma} (b) \ | \ b \in B \setminus B^{\sigma} \}$. For a local slice $s \in B$ of $\sigma$, it is clear that $\lc_{\sigma} (s) \in B^{\sigma}$. 
We recall the following well-known result.

\begin{lem}
Let $\sigma$ be a non-trivial exponential  map on an $R$-domain $B$. 
For a local slice $s \in B$ of $\sigma$, set $a = \lc_{\sigma} (s)$. 
Then $B_a (:= B[\frac{1}{a}]) = (B^{\sigma})_a [s]$ and $s$ is indeterminate over $B^{\sigma}$. 
\end{lem}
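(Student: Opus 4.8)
The plan is to work with the degree function $\deg_{\sigma}$ and the leading-coefficient function $\lc_{\sigma}$, whose basic properties come from the fact that $\sigma$ is a ring homomorphism into the domain $B[U]$. Writing $\sigma(b)=\sum_{i\ge 0}D_i(b)U^i$ defines additive maps $D_i\colon B\to B$ with $D_0=\mathrm{id}_B$ (by condition (i)), satisfying the Leibniz rule $D_i(xy)=\sum_{p+q=i}D_p(x)D_q(y)$ and, by condition (ii), the Hasse--Schmidt relations $D_j\circ D_i=\binom{i+j}{i}D_{i+j}$. The Leibniz rule gives that $\deg_{\sigma}$ is additive and $\lc_{\sigma}$ is multiplicative, so $\deg_{\sigma}(xy)=\deg_{\sigma}(x)+\deg_{\sigma}(y)$ for nonzero $x,y$. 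Since $B^{\sigma}=\{b:\deg_{\sigma}(b)=0\}\cup\{0\}$ and degrees are nonnegative, this immediately yields that $B^{\sigma}$ is factorially closed in $B$, hence algebraically closed in $B$. As $s\notin B^{\sigma}$, it follows that $s$ is transcendental over $B^{\sigma}$, which settles the indeterminacy assertion. I would also record here that $\lc_{\sigma}(b)\in B^{\sigma}$ for \emph{every} $b$: if $m=\deg_{\sigma}(b)$ then $\lc_{\sigma}(b)=D_m(b)$, and for $j\ge 1$ the relation gives $D_j(D_m(b))=\binom{m+j}{m}D_{m+j}(b)=0$, so $\sigma(\lc_{\sigma}(b))=\lc_{\sigma}(b)$. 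In particular $a=\lc_{\sigma}(s)\in B^{\sigma}$.

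Next I would prove $B_a=(B^{\sigma})_a[s]$. The inclusion $(B^{\sigma})_a[s]\subseteq B_a$ is clear. Since $a\in B^{\sigma}$, the map $\sigma$ extends to an exponential map on $B_a$ with ring of constants $(B^{\sigma})_a$, and $a$ becomes a unit. For the reverse inclusion I would show by induction on $m=\deg_{\sigma}(b)$ that every $b\in B$ lies in $(B^{\sigma})_a[s]$. If $m=0$ then $b\in B^{\sigma}$ and we are done; otherwise $m\ge n$ by minimality of $n$. Granting the key divisibility $n\mid m$ (discussed below), put $e=m/n$. Then $\deg_{\sigma}(s^{e})=en=m$ and $\lc_{\sigma}(s^{e})=a^{e}$ by multiplicativity, so the element
\[
b'=b-\frac{\lc_{\sigma}(b)}{a^{e}}\,s^{e}
\]
has the same leading coefficient $\lc_{\sigma}(b)$ cancelled at degree $m$, whence $\deg_{\sigma}(b')<m$. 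Here the coefficient $\lc_{\sigma}(b)/a^{e}$ lies in $(B^{\sigma})_a$ because $\lc_{\sigma}(b)\in B^{\sigma}$. By the inductive hypothesis $b'\in(B^{\sigma})_a[s]$, and therefore so does $b$. Iterating reduces any element down to $B^{\sigma}\subseteq (B^{\sigma})_a[s]$, giving the claimed equality.

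The step I expect to be the main obstacle is precisely the divisibility $n\mid \deg_{\sigma}(b)$ for all $b$, equivalently that the leading term of $b$ can always be matched by a power of $s$. In characteristic $0$ this is vacuous, since there $n=1$ (one can always descend to a local slice of degree exactly $1$), and the reduction above is the classical slice argument. In characteristic $p>0$, however, $n$ may exceed $1$, and the genuine work lies in the Hasse--Schmidt relations: because $\binom{i+j}{i}$ can vanish modulo $p$, the operators $D_{p^{k}}$ are not powers of $D_n$, so one must analyze which degrees occur. The plan is to deduce from $D_j(D_1(b))=(1+j)D_{1+j}(b)$ (and its higher analogues) that $D_i\equiv 0$ for $0<i<n$, forcing $D_n$ to be a derivation and every degree to be a multiple of $n$; the positive-characteristic bookkeeping of the vanishing binomial coefficients is the crux, and I would isolate it as a preliminary lemma before invoking the reduction. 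This is also the point at which the argument of \cite{N19} and \cite{CDDG21}, tailored to the characteristic-zero correspondence with locally nilpotent derivations, has to be replaced by the exponential-map formalism.
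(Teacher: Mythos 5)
Your preliminary steps are sound: the additivity of $\deg_{\sigma}$ and multiplicativity of $\lc_{\sigma}$, the factorial (hence algebraic) closedness of $B^{\sigma}$ giving the transcendence of $s$, the fact that $\lc_{\sigma}(b)\in B^{\sigma}$ via $D_j\circ D_m=\binom{m+j}{m}D_{m+j}$, and the extension of $\sigma$ to $B_a$ with ring of constants $(B^{\sigma})_a$ are all correct, and the degree-reduction $b\mapsto b-\bigl(\lc_{\sigma}(b)/a^{e}\bigr)s^{e}$ works \emph{granted} the divisibility $n\mid\deg_{\sigma}(b)$. But exactly at the step you yourself flag as the crux, the route you sketch cannot work: the claim that $D_i\equiv 0$ for $0<i<n$ (and that $D_n$ is consequently a derivation) is false. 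Take $B=k[x]$ with $\ch(k)=p>0$ and $\sigma(x)=x+U+U^{p}$; this is a non-trivial exponential map (additivity of $U+U^{p}$ in characteristic $p$ gives condition (ii)) with $B^{\sigma}=k$ and $\deg_{\sigma}(f)=p\deg_x(f)$ for $f\notin k$, so $x$ is a local slice and $n=p\geq 2$, yet $D_1(x)=1\neq 0$; one also checks that $D_p=H_p+H_1$ (the $H_i$ being Hasse derivatives) violates the Leibniz rule, so $D_n$ is not a derivation. Note this is not a counterexample to the divisibility itself (here every degree is a multiple of $p$), only to your proposed mechanism for proving it. What minimality of $n$ actually controls is the polynomial $\sigma(s)$, not the operators $D_i$ on all of $B$: the middle coefficients $D_i(s)$ for $0<i<n$ have $\deg_{\sigma}\leq n-i<n$, hence lie in $B^{\sigma}$, so $\sigma(s)=s+g(U)$ with $g\in B^{\sigma}[U]$, and iterativity forces $g(U+V)=g(U)+g(V)$, i.e.\ $g$ is an additive polynomial; this yields $n=p^{e}$ in characteristic $p$ (and $n=1$ in characteristic $0$), but it does not give $n\mid\deg_{\sigma}(b)$ for arbitrary $b$, which your induction consumes as an input.

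The divisibility can be rescued, but by a different argument than the one you outline: comparing coefficients in $\sigma_V\circ\sigma_U=\sigma_{U+V}$ gives $\sigma_V(D_i(b))=\sum_{j\geq i}\binom{j}{i}D_j(b)V^{j-i}$, so whenever $\binom{m}{i}\neq 0$ in $B$ (with $m=\deg_{\sigma}(b)$ and $0<i<m$) the element $D_i(b)$ has degree exactly $m-i$; a strong induction on $m$, combined with Lucas' theorem and the fact $n=p^{e}$, then forces $n\mid m$. Alternatively, one can avoid divisibility as a standalone claim by a strong induction showing directly that all coefficients $D_i(b)$, $i\geq 1$, already lie in $(B^{\sigma})_a[s]$ and deducing $b\in(B^{\sigma})_a[s]$ from the resulting cocycle identity. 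For comparison: the paper proves nothing here at all --- its proof of Lemma 2.3 is a citation to Kuroda \cite[Corollary 2.4]{Kuroda17} --- so a from-scratch argument was always going to have to supply precisely this positive-characteristic core, and as submitted yours does not; the gap is genuine.
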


\begin{proof}
See, e.g., \cite[Corollary 2.4]{Kuroda17}. 
\end{proof}

Let $\EXP(B)$ denote the set of all exponential maps on $B$. Then the Makar-Limanov invariant of $B$, which  is denoted by $\ML (B)$, is defined as
$$
\ML (B) = \bigcap_{\sigma \in \EXP (B) } B^{\sigma}.
$$
The Makar-Limanov invariant $\ML(B)$ of $B$ is also called the AK invariant of $B$ (the ring of absolute constants of $B$) and denoted by $\AK(B)$. 

Let $A$ be a subring of $B$. Then $A$ is said to be {\em factorially closed} in $B$ if, for any two nonzero elements $b_1, b_2 \in B$, $b_1 b_2 \in A$ implies that $b_1, b_2 \in A$. In some papers, a factorially closed subring is called an {\em inert} subring. It is well known that $B^{\sigma}$ is factorially closed in $B$ for any $\sigma \in \EXP (B)$  (see e.g., \cite[Lemma 2.1 (1)]{K16}).

\section{Retracts and rings of invariants of exponential maps}

In  this section, we give some sufficient conditions for a retract $A$ of $k^{[3]}$ to be a polynomial ring. Throughout this subsection, $R$ denotes a domain. 

\subsection{Retracts and rings of constants of exponential maps}

In this subsection, we consider retracts and rings of invariants of exponential maps on $R$-domains. 
The following result is a generalization of \cite[Proposition 4.2]{CDDG21} and \cite[Theorem 2.5]{LS21}.
In fact, the proof of Theorem 3.1 is almost the same as that of \cite[Theorem 2.5]{LS21}. 
In Theorem 3.1, the implication (1) $\Longrightarrow$ (2) and the equivalence of (1) and (3) when $B$ is a UFD follow from \cite[Proposition 4.2]{CDDG21}. Here, we use the technique given in the proof of \cite[Theorem 2.5]{LS21}.
\begin{thm}
Let $B$ be an $R$-domain and $A$ an $R$-subalgebra of $B$. Then the following conditions are equivalent to each other{\rm :}
\begin{itemize}
\item[(1)]
$B \cong A^{[1]}$ as an $A$-algebra.
\item[(2)]
$A$ is a retract of $B$ with $B = A \oplus Bh$ for some $h \in B \setminus \{ 0 \}$ and $A = B^{\sigma}$ for some non-trivial exponential map $\sigma$ on $B$.
\end{itemize}
Moreover, if $B$ is a UFD, then the condition {\rm (1)} is equivalent to the following condition {\rm (3)}.
\begin{itemize}
\item[(3)]
$A$ is a retract of $B$ and $A = B^{\sigma}$ for some non-trivial exponential map $\sigma$ on $B$. 
\end{itemize}
\end{thm}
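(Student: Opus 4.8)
The plan is to establish the cycle $(1) \Rightarrow (2) \Rightarrow (1)$, and then, under the UFD hypothesis, to reduce $(3)$ to $(2)$; the reverse implication $(1) \Rightarrow (3)$ is immediate, since the condition $(2)$ obtained from $(1)$ already contains $(3)$ verbatim.

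For $(1) \Rightarrow (2)$ I would make the polynomial structure explicit. Write $B = A[t]$ with $t$ an indeterminate over $A$. The evaluation $\varphi := \epsilon_0 \colon B \to A$ at $t = 0$ is a retraction with $\ker \varphi = Bt$, so $B = A \oplus Bt$ and one may take $h = t$. Next, let $\sigma \colon B \to B[U]$ be the $A$-algebra homomorphism determined by $\sigma(t) = t + U$. Verifying the two axioms is a direct substitution: $\epsilon_0(\sigma(t)) = t$ gives (i), and $\sigma_V(\sigma_U(t)) = t + U + V = \sigma_{U+V}(t)$ gives (ii). Finally, for $x = \sum_i a_i t^i$ with $a_i \in A$ one has $\sigma(x) = \sum_i a_i (t+U)^i$, whose top coefficient in $U$ is $a_n$ when $\deg_t x = n$; hence $\sigma(x) = x$ forces $n = 0$, so $B^\sigma = A$, and $\sigma$ is non-trivial since $\sigma(t) \neq t$.

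The heart of the matter is $(2) \Rightarrow (1)$, where I would follow the Liu--Sun degree argument. Let $\varphi \colon B \to A$ be the retraction with $\ker \varphi = Bh$. Since $A \cap Bh = 0$ and $h \neq 0$, we have $h \notin A = B^\sigma$; and since $A$ is a retract, it is algebraically closed in $B$ (Lemma 2.2(1)), so $h$ is transcendental over $A$ and $A[h] = A^{[1]}$. It remains to prove $B = A[h]$, and here $\deg_\sigma$ is the decisive tool: as $B$ is a domain, $\deg_\sigma$ is additive on products, and $\deg_\sigma h \geq 1$ because $h \notin B^\sigma$. Given $x \in B$, I would induct on $\deg_\sigma x$. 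If $\deg_\sigma x = 0$ then $x \in A$; otherwise write $x = \varphi(x) + yh$ with $\varphi(x) \in A$ and $yh = x - \varphi(x) \in Bh$. Since $\varphi(x) \in B^\sigma$ has $\sigma$-degree $0$ while $\deg_\sigma x \geq 1$, no cancellation of leading terms occurs, so $\deg_\sigma(yh) = \deg_\sigma x$ and therefore $\deg_\sigma y = \deg_\sigma x - \deg_\sigma h < \deg_\sigma x$. The inductive hypothesis gives $y \in A[h]$, whence $x \in A[h]$ and $B = A[h] \cong A^{[1]}$. The main obstacle is exactly the termination of this expansion; the direct-sum decomposition $B = A \oplus Bh$ together with the strict drop of $\deg_\sigma$ at each step is what forces it to stop.

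For the UFD statement it suffices to deduce $(2)$ from $(3)$, and the subtle point there is principality of the retract ideal. Let $\varphi \colon B \to A$ be a retraction and $I = \ker \varphi$, so $B = A \oplus I$. As $B/I \cong A$ is a domain, $I$ is prime, and $I \neq 0$ since $A = B^\sigma \subsetneq B$ ($\sigma$ being non-trivial). I would then show $\operatorname{ht} I = 1$. Pick a local slice $s$ with $a = \lc_\sigma(s) \in A \setminus \{0\}$; by Lemma 2.3, $B_a = A_a[s]$ with $s$ transcendental over $A$. Because $I \cap A = 0$ and $a \in A \setminus \{0\}$, we have $a \notin I$, so $I$ survives in $B_a$ as the nonzero prime $IB_a$ of $A_a[s]$ contracting to $0$ in $A_a$; such a prime corresponds to a nonzero prime of the PID $Q(A)[s]$ and hence has height $1$, and localization at $a$ preserves the heights of primes contained in $I$. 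Thus $\operatorname{ht} I = 1$, and in the UFD $B$ a height-one prime is principal: a prime factor $p$ of any nonzero element of $I$ lies in $I$ and yields a nonzero principal prime $(p) \subseteq I$, forcing $I = (p)$. Writing $I = Bh$ places us exactly in the situation of $(2)$, and the previous paragraph then gives $B \cong A^{[1]}$.
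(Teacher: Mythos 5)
Your proof is correct, and while the implications $(1)\Rightarrow(2),(3)$ coincide with the paper's explicit construction, both substantive implications take a genuinely different route. For $(2)\Rightarrow(1)$ the paper first proves that $h$ is itself a local slice (comparing $\deg_{\sigma}(h)$ with a local slice $s$ via $s-\pi(s)=bh$), then invokes Lemma 2.3 to get $B_a=A_a[h]$ with $a=\lc_{\sigma}(h)$, and finally clears the denominator $a^m$ using the iterated decomposition $B=A\oplus Ah\oplus\cdots\oplus Ah^r\oplus Bh^{r+1}$; your strong induction on $\deg_{\sigma}x$, resting only on additivity of $\deg_{\sigma}$ in the domain $B$, the equivalence $\deg_{\sigma}x=0\Leftrightarrow x\in B^{\sigma}$, and the splitting $x=\varphi(x)+yh$ with no cancellation in top degree, bypasses local slices, Lemma 2.3 and localization entirely in this step, which makes it shorter and more self-contained. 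For $(3)\Rightarrow(2)$ the paper stays at the level of explicit factorization: it takes a local slice $s\in I$, factors it into primes, shows exactly one factor $p_1$ is a local slice while the others lie in $A=B^{\sigma}$ (so $p_1\in I$ by primality of $I$), and proves $I=p_1B$ by a divisibility argument in $B_a=A_a[p_1]$ that uses factorial closedness of $B^{\sigma}$ to exclude $a^{n(u)}\in p_1B$; you instead prove $\operatorname{ht}I=1$ abstractly (since $a\notin I$ and $IB_a\cap A_a=0$, the ideal $I$ survives as a prime of $B_a=A_a[s]$ contracting to $0$ in $A_a$, hence corresponds to a height-one prime of the PID $Q(A)[s]$, and both localizations preserve chains below $I$ because every prime contained in $I$ avoids $a$ and meets $A_a$ trivially) and then conclude principality from the UFD property via a prime factor of any nonzero element of $I$. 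Your height computation is sound and even dispenses with the paper's normalization $s\in I$ and the identification of which prime factor is a local slice; what the paper's version buys is an argument using nothing beyond unique factorization and it exhibits the generator of $I$ as a prime local slice, whereas yours trades that hands-on information for a cleaner appeal to standard dimension-theoretic facts.
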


\begin{proof}
The proof consists of three parts.

\noindent
\textbf{(1) $\Longrightarrow$ (2) and (1) $\Longrightarrow$ (3).} 
By (1), $B = A[h]$, where $h$ is an indeterminate. So $A$ is a retract of $B$ and $B = A \oplus Bh$. We have an $R$-algebra homomorphism $\sigma: B \to B^{[1]} = B[t]$ such that $\sigma(h) = h + t$ and  $\sigma(a) = a$ for any $a \in A$. Then $\sigma$ is an exponential map on $B$ and $B^{\sigma} = A$. This proves (2) and (3). 
\smallskip

\noindent
\textbf{(2) $\Longrightarrow$ (1).} 
Let $\pi: B \to A=B^{\sigma}$ be the projection form $B$ onto $A$ regarding to the decomposition $B = A \oplus Bh$. We know that $\pi$ is a retraction with $\Ker \pi = Bh$. Then $h$ is a local slice of $\sigma$. Indeed, for a local slice $s \in B$, $s':= s - \pi(s) \in Bh$, say $s' = bh$ for some $b \in B\setminus \{0 \}$. Since $h \not\in A = B^{\sigma}$, $\deg_{\sigma} (h) \geq \deg_{\sigma} (s)$. Moreover, 
$$
\deg_{\sigma} (s) = \deg_{\sigma} (s') = \deg_{\sigma} (b) + \deg_{\sigma} (h) \geq \deg_{\sigma} (h),
$$
which implies $\deg_{\sigma} (h) = \deg_{\sigma} (s)$. Hence $h$ is a local slice of $\sigma$. 

By Lemma 2.3, $B_a = A_a [h]$, where $a = \lc_{\sigma}(h) \in A$. Since $A$ is algebraically closed in $B$ and $h \not\in A$, $A [h] \cong A^{[1]}$ as $A$-algebras. For any $f \in B$, there exists a positive integer $m$ such that $a^m f \in A[h]$. We may assume that $a^m f \in M:= A \oplus Ah \oplus \cdots \oplus Ah^r$ for some $r \geq 0$. Since 
$$
B = A \oplus Bh = A \oplus (A \oplus Bh) h = A \oplus Ah \oplus Bh^2,
$$ 
we have $B = M \oplus Bh^{r+1}$. If $f = g + bh^{r+1}$ for some $g \in M$ and $b \in B$, then $a^m b h^{r+1} = a^m f - a^m g \in M \cap Bh^{r+1} = \{ 0 \}$, hence $b = 0$. Therefore, $f = g  \in M \subset A[h]$. This proves (1). 
\smallskip

\noindent
\textbf{(3) $\Longrightarrow$ (2) in the case where $B$ is a UFD.} Let $\pi: B \to A$ be a retraction and set $I = \Ker \pi$. Then $B = A \oplus I$. Take a local slice $s$ of $\sigma$, which exists since $\sigma$ is non-trivial. Since $s - \pi(s) \in I$ and $s - \pi(s)$ is a local slice of $\sigma$ because $\pi(s) \in A = B^{\sigma}$, we may assume that $s \in I$. Let $s = p_1 \cdots p_{\ell}$ be the prime decomposition of $s$.
Since
$
\deg_{\sigma} (s) = \sum_{i=1}^{\ell} \deg_{\sigma}(p_i)
$
and $\deg_{\sigma} (p_i) = 0$ or $\geq \deg_{\sigma} (s)$ for $i = 1, \ldots, \ell$, there exists unique $i$ such that $\deg_{\sigma} (p_i ) = \deg_{\sigma} (s)$, say $\deg_{\sigma} (p_1) = \deg_{\sigma} (s)$, and $\deg_{\sigma} (p_2) = \cdots = \deg_{\sigma} (p_{\ell}) = 0$. 
Namely, $p_1$ is a local slice of $\sigma$ and $p_2, \ldots, p_{\ell} \in A$. Set $a = \lc_{\sigma} (p_1)$. By Lemma 2.3, $B_{a} = A_{a}[p_1]$ and $p_1$ is indeterminate over $A$. 
Since $s = p_1 p_2 \cdots p_s \in I$ and $p_2, \ldots, p_s \in A$, $p_1 \in I$. 

For any $u \in I$, there exist a positive integer $n(u)$ and some $a_0, a_1, \ldots, a_n \in A$ such that
$$
a^{n(u)} u = a_0 + a_1 p_1 + \cdots + a_n p_1^n. 
$$
Since $u, p_1 \in I$, $a_0 \in I \cap A$. Then $a_0 = 0$ and so $a^{n(u)} u \in p_1 B$. Since $p_1$ is a prime element of $B$, $u \in p_1 B$ or $a^{n(u)} \in p_1 B$. If $a^{n(u)} \in p_1 B$, then $v p_1 = a^{n(u)}$ for some $v \in B$. Since $a \in A = B^{\sigma}$ and $A$ is factorially closed in $B$, $p_1 \in A$. This is a contradiction. Therefore, $I = p_1 B$ is a principal ideal. This proves (2). 
\end{proof}

By Theorem 3.1 and the cancellation theorem for the affine plane over a field, we have the following result.

\begin{cor}
Let $k$ be a field and $A$ a $k$-algebra. If $A$ is a retract of $B = k^{[3]}$ and there exists a non-trivial exponential map $\sigma$ on $B$ such that $A = B^{\sigma}$, then $A \cong k^{[2]}$.
\end{cor}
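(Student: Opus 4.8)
The plan is to reduce the assertion to the cancellation theorem for the affine plane, using Theorem 3.1 to convert the two hypotheses into the statement that $B$ is a polynomial ring in one variable over $A$. Since $B = k^{[3]}$ is a UFD, Theorem 3.1 provides the equivalence of its conditions (1) and (3), and condition (3) is precisely what is assumed here: $A$ is a retract of $B$, and $A = B^{\sigma}$ for a non-trivial exponential map $\sigma$ on $B$. First I would therefore invoke the implication (3) $\Longrightarrow$ (1) to conclude $B \cong A^{[1]}$ as an $A$-algebra, that is, $A^{[1]} \cong k^{[3]}$.

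Next I would verify that $A$ is an affine $k$-domain of dimension two, which is what makes the cancellation theorem applicable. As a $k$-subalgebra of the domain $B$, the ring $A$ is itself a domain; as a retract of the finitely generated $k$-algebra $B$, it is finitely generated over $k$ by Lemma 2.2 (2). Comparing transcendence degrees in the isomorphism $A^{[1]} \cong k^{[3]}$ gives $\trdeg_k A + 1 = 3$, hence $\trdeg_k A = 2$, so that $\dim A = 2$.

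Finally, writing $A^{[1]} \cong k^{[3]} = (k^{[2]})^{[1]}$ exhibits the two-dimensional affine $k$-domain $A$ as one that becomes $k^{[3]}$ after adjoining a single indeterminate; the cancellation theorem for the affine plane, which is valid over an arbitrary field and in particular in positive characteristic, then yields $A \cong k^{[2]}$. I expect the only substantial ingredient to be this last step: the passage through Theorem 3.1 and the transcendence-degree bookkeeping are purely formal, whereas affine-plane cancellation in positive characteristic is the deep external input on which the corollary rests, and it is precisely here that the restriction to $n = 3$ — as opposed to larger $n$, where cancellation may fail — is essential.
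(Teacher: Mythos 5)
Your proposal is correct and follows exactly the paper's argument: apply the UFD case of Theorem 3.1 (the equivalence of (1) and (3)) to get $B \cong A^{[1]}$, then invoke the cancellation theorem for $k^{[2]}$, valid in arbitrary characteristic. The additional bookkeeping you include (finite generation of $A$ via Lemma 2.2\,(2) and the transcendence-degree count) is harmless but not needed, since the cited cancellation results apply directly to any $k$-algebra $A$ with $A^{[1]} \cong k^{[3]}$.
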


\begin{proof}
By Theorem 3.1, $B \cong A^{[1]}$ as $k$-algebras. By the cancellation property of $k^{[2]}$ (see \cite[Theorem 2.1]{BG15}, \cite[Theorem 1.2]{Kuroda17} or \cite[Theorem 3.4]{K16}), we have $A \cong k^{[2]}$. 
\end{proof}

\subsection{Retracts of $k^{[3]}$ with non-trivial exponential maps}

In this subsection, we prove the following result. 

\begin{thm}
Let $k$ be a field of $p:= \ch (k) \geq 0$ and $B = k^{[3]}$ the polynomial ring in three variables over $k$. Let $A$ be a retract of $B$ with $\trdeg_k A = 2$. If $p = 0$ or $\ML(A) \not= A$, then $A \cong k^{[2]}$ as $k$-algebras. 
\end{thm}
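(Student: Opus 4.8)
The case $p=0$ is already covered by \cite[Theorem 2.5]{N19} (or \cite[Theorem 5.8]{CDDG21}), i.e.\ result (3) of the introduction, so I would assume $p>0$ throughout. The first task is to extract the structural consequences of $A$ being a retract of $B=k^{[3]}$. By Lemma \ref{properties of retracts}, $A$ is a finitely generated, regular UFD over $k$ that is algebraically closed in $B$; since $\trdeg_k A=2$ it is a two\mbox{-}dimensional affine domain, and because $A\subseteq B$ forces $A^{\ast}\subseteq B^{\ast}=k^{\ast}$ while $k\subseteq A$, we get $A^{\ast}=k^{\ast}$. The hypothesis $\ML(A)\neq A$ furnishes a non\mbox{-}trivial exponential map $\sigma$ on $A$; put $R:=A^{\sigma}$. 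Then $R$ is factorially closed in $A$, hence a UFD with $R^{\ast}=k^{\ast}$, and by Lemma 2.3 we have $A_a=R_a[s]$ for a local slice $s$ with $a=\lc_{\sigma}(s)\in R$. In particular $Q(A)=Q(R)(s)$ with $s$ transcendental, so $\trdeg_k R=1$ and $R$ is an affine curve over $k$.

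The plan is to reduce everything to a single assertion, which I call the crux: that $R=A^{\sigma}$ is a retract of $A$. Granting the crux, the theorem follows formally in two independent moves. First, retractions compose: composing a retraction $A\to R$ with a retraction $B\to A$ yields a $k$-algebra homomorphism $B\to R$ that restricts to the identity on $R$, so $R$ is a retract of $B=k^{[3]}$; since $\trdeg_k R=1$, Costa's theorem (result (2) of the introduction, \cite[Theorem 3.5]{C77}) gives $R\cong k^{[1]}$. Crucially, this invocation of Costa's theorem sidesteps the problem of non\mbox{-}trivial forms of $k^{[1]}$, since retracts of transcendence degree one are polynomial rings in any characteristic. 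Second, as $A$ is a UFD and $R=A^{\sigma}$, the implication (3)$\Rightarrow$(1) of Theorem 3.1, applied with $(A,R)$ in place of $(B,A)$, gives $A\cong R^{[1]}$. Combining the two, $A\cong R^{[1]}\cong (k^{[1]})^{[1]}=k^{[2]}$, as desired.

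Thus the whole proof rests on the crux, and this is where I expect the main obstacle to lie. Proving that $R=A^{\sigma}$ is a retract of $A$ amounts to producing an ideal $J$ with $A=R\oplus J$, equivalently to showing that the $\A^1$-fibration $\Spec A\to\Spec R$ determined by $\sigma$ is a trivial line bundle, i.e.\ $A=R^{[1]}$. Generic triviality is already supplied by Lemma 2.3 in the form $A_a=R_a[s]$, so the real difficulty is to globalize this over $\Spec R$ and to rule out degenerate or multiple fibres. Here I would lean on the two properties singled out above — that $A$ is factorial and that $A^{\ast}=k^{\ast}$ — since a reducible or multiple fibre of the fibration would manufacture either a non\mbox{-}principal height\mbox{-}one prime or a non\mbox{-}constant unit, both excluded. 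In characteristic zero this is exactly the Rentschler--Miyanishi description of additive group actions on factorial affine surfaces with trivial units; but, as the introduction stresses, the characteristic\mbox{-}zero machinery (locally nilpotent derivations with slices, the nonexistence of non\mbox{-}trivial forms of $k^{[2]}$) is unavailable when $p>0$. Supplying a characteristic\mbox{-}free argument that $R$ is a retract of $A$, through the structure theory of exponential maps on factorial affine surfaces, is the substantive part of the proof.
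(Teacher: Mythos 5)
There is a genuine gap, and you have located it yourself: your entire argument funnels into the ``crux'' that $R=A^{\sigma}$ is a retract of $A$, and you leave that step unproven. Worse, the crux is not an auxiliary lemma but the theorem itself in disguise: by the implication (3)\,$\Rightarrow$\,(1) of Theorem 3.1 (which you invoke), for the UFD $A$ with $R=A^{\sigma}$, the statement ``$R$ is a retract of $A$'' is \emph{equivalent} to $A\cong R^{[1]}$, which together with $R\cong k^{[1]}$ is the conclusion. Your sketched attack on the crux --- that factoriality of $A$ and $A^{\ast}=k^{\ast}$ should rule out degenerate fibres of the $\A^1$-fibration, ``as in Rentschler--Miyanishi'' --- is precisely the characteristic-zero machinery you correctly note is unavailable for $p>0$; in positive characteristic the passage from generic triviality $A_a=R_a[s]$ to global triviality is the hard point (non-smooth fibres and forms of the affine line over $Q(R)$ can occur), and plain factoriality over a non-closed field $k$ does not suffice. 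So the formal scaffolding (composition of retractions, Costa for $\trdeg_k R=1$, Theorem 3.1) is correct but carries no weight; everything substantive is deferred.

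The paper closes exactly this gap by a different route that never needs $R$ to be a retract of $A$. First, $R\cong k^{[1]}$ is proved directly: $R$ is a one-dimensional factorially closed (hence UFD) subring, and the Evyatar--Zaks/Abhyankar--Eakin--Heinzer theorem (\cite{EZ70}, \cite{AEH72}) applies --- no appeal to Costa is needed. Second, and this is the step entirely absent from your proposal, the retract hypothesis is used a second time under base change: $A\otimes_k k'$ is a retract of $k'^{[3]}$ for every algebraic extension $k'/k$ (\cite[Lemma 1.2]{N19}), so $A$ is \emph{geometrically} factorial, not merely factorial. Third, with $R=k[f]$, generic triviality $S^{-1}A=k(f)^{[1]}$ from Lemma 2.3, and $k(f)\cap A=k[f]$ from factorial closedness (\cite[Lemma 2.1 (3)]{K16}), the characteristic-free globalization theorem of Russell--Sathaye (\cite[Theorem 2.4.2]{RS79}, quoted as Lemma 3.4) yields $A\cong k[f]^{[1]}\cong k^{[2]}$ at once. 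In short: the ``characteristic-free argument'' you flag as the substantive missing part is supplied in the paper by geometric factoriality plus Russell--Sathaye, and your proposal as written does not prove the theorem.
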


\begin{proof}
If $p = 0$, then $A \cong k^{[2]}$ by \cite[Theorem 5.8]{CDDG21} or \cite[Theorem 2.5]{N19}. 
From now on, assume that $p > 0$ and $\ML(A) \not= A$. Since $A$ is a retract of $B = k^{[3]}$ and $B$ is finitely generated as a $k$-algebra, $A$ is finitely generated as a $k$-algebra by Lemma 2.2 (2). Further, since $B$ is a UFD, so is $A$ by Lemma 2.2 (3). In particular, $A$ is normal. 

By $\ML(A) \not= A$, there exists a non-trivial exponential map $\sigma$ on $A$. Set $R = A^{\sigma}$. 

\begin{claim1}
$R \cong k^{[1]}$. 
\end{claim1}

\begin{proof}
Since $\sigma$ is a non-trivial exponential map on $A$, we have
$$
\trdeg_k R = \trdeg_k A - 1 = 2 -1 = 1.
$$
Since $A$ is a UFD, so is $R$ by \cite[Lemma 2.1 (1)]{K16}. Then we infer from \cite[Theorem III]{EZ70} (or \cite[Theorem (4.1)]{AEH72}) that $R \cong k^{[1]}$. 
\end{proof}

\begin{claim2}
{\rm 
$A$ is geometrically factorial over $k$, i.e., $A \otimes_k k'$ is a UFD for any algebraic extension field $k'$ of $k$.  
}
\end{claim2}

\begin{proof}
Let $k'$ be an algebraic extension field of $k$.
By \cite[Lemma 1.2]{N19}, $A \otimes_k k'$ is a retract of $B \otimes_k k'$ over $k'$. 
Since $B \otimes_k k' \cong k'^{[3]}$ is a UFD, so is $A \otimes_k k'$. 
This proves the assertion. 
\end{proof}

\begin{claim3}
{\rm 
$A \cong k^{[2]}$. 
}
\end{claim3}

\begin{proof}
By Claim 1, $R = k[f]$ for some $f \in A \setminus k$. 
By \cite[Lemma 2.1 (3)]{K16}, $Q(R) \cap A = R$. Hence, $k(f) \cap A = k[f]$. 
Set $S= k[f] \setminus \{ 0 \}$. 
Then, by Lemma 2.3, we know that $S^{-1} A \cong k(f)^{[1]}$ because $R = k[f]$. 
By Claim 2, $A$ is geometrically factorial over $k$. Therefore, by virtue of Lemma 3.4 below, we know that $A \cong k[f]^{[1]} \cong k^{[2]}$.  
\end{proof}

Theorem 3.3 is thus verified. 

\end{proof}

\begin{lem} 
{\rm (Russell--Sathaye \cite[Theorem 2.4.2]{RS79})}
Let $k$ be a field, $A$ a finitely generated $k$-domain and $f \in A$. Set $S = k[f] \setminus \{ 0 \}$. Assume that the following conditions {\rm (1)}--{\rm (3)} are satisfied{\rm :}
\begin{enumerate}
\item[{\rm (1)}]
$S^{-1} A = k(f)^{[1]}$.
\item[{\rm (2)}]
$k(f) \cap A = k[f]$.
\item[{\rm (3)}]
$A$ is geometrically factorial over $k$. 
\end{enumerate}
Then $A \cong k[f]^{[1]}$. 
\end{lem}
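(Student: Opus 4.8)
The plan is to realize $A$ as the coordinate ring of an $\A^1$-fibration $\Spec A \to \Spec k[f] = \A^1$ and to show this fibration is a trivial line over its base. Write $B = k[f]$. We may assume $f$ is transcendental over $k$ (otherwise $B$ is a field and the claim is immediate from (1)), so $B = k^{[1]}$ is a PID; since $A$ is a domain containing $B$ it is torsion-free, hence flat, over $B$. Choosing $w \in A$ with $S^{-1}A = k(f)[w]$ (clearing denominators, which lie in $S = B \setminus \{0\}$), we get a chain $B \subseteq A \subseteq k(f)[w]$ with $w$ transcendental over $k(f)$. The goal is to produce $T \in A$ with $A = B[T] \cong B^{[1]} = k[f]^{[1]}$. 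I note already the clean fact that the unit group is essentially trivial: any $u \in A^{\times}$ lies in $k(f)[w]^{\times} = k(f)^{\times}$ and has $w$-degree $0$, so by (2) $u \in (A \cap k(f))^{\times} = B^{\times} = k^{\times}$.

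First I would set up the leading-coefficient filtration. For $d \ge 0$ put $A_d = \{a \in A : \deg_w a \le d\}$; these are $B$-submodules with $A = \bigcup_d A_d$ and $A_d A_e \subseteq A_{d+e}$, and condition (2) gives exactly $A_0 = A \cap k(f) = k[f] = B$. The coefficient-of-$w^d$ map identifies $A_d/A_{d-1}$ with a $B$-submodule $I_d \subseteq k(f)$, i.e. a fractional ideal, which is principal since $B$ is a PID; because $w^d \in A_d$ has leading coefficient $1$ we have $B \subseteq I_d$, so $I_d = s_d^{-1}B$ with $s_d \in B$. Multiplicativity of leading coefficients (no cancellation in a domain) gives $I_1^d \subseteq I_d$, i.e. $s_1^{d} \mid s_d$. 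The whole lemma now reduces to the single numerical claim
\[
I_d = I_1^d \qquad (d \ge 0),
\]
equivalently $s_d \mid s_1^{d}$. Granting it, I would pick $T \in A_1$ whose leading coefficient $s_1^{-1}$ generates $I_1$; then $k(f)[T] = S^{-1}A$, and induction on $d$ (using $\ell_d(T^d) = s_1^{-d}$ generates $I_d$) yields $A_d = A_{d-1} \oplus BT^d$, whence $A = \bigoplus_{i\ge 0} BT^i = B[T] \cong k[f]^{[1]}$.

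The remaining claim $I_d \subseteq I_1^d$ is the heart of the matter and the step I expect to be the main obstacle. Since $\otimes_k \ol{k}$ is faithfully flat, equality of the fractional ideals $I_d$ may be checked after base change, and by (3) the ring $R := A \otimes_k \ol{k}$ is again a UFD; this lets me avoid any delicate descent of forms from $\ol{k}$ to $k$ and work geometrically over $\ol{k}$. The claim $I_1^d = I_d$ then translates into the statement that every closed fiber of $\Spec R \to \A^1_{\ol{k}}$ is a single reduced copy of $\A^1$, i.e. that $f - \lambda$ is prime in $R$ for all $\lambda \in \ol{k}$; a strict inclusion $I_1^d \subsetneq I_d$ corresponds precisely to a $\lambda$ over which the fiber acquires a multiplicity or splits into several components. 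Over a field of positive characteristic such non-trivial $\A^1$-forms genuinely occur — this is the mechanism behind the Asanuma-type counterexamples — and they are invisible to bare factoriality; it is exactly here that \emph{geometric} factoriality must enter.

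Concretely, I would exploit that every prime $t$ of $R$ dividing some $f-\lambda$ becomes a unit in $S^{-1}R = \ol{k}(f)[w]$, so the vertical primes are exactly the prime factors of the various $f - \lambda$, and then combine the UFD property with the triviality of units and the integrality of the generic fiber to force each $f-\lambda$ to be prime of multiplicity one. Carrying this fiberwise analysis out rigorously — ruling out inseparable/non-reduced fibers in characteristic $p$ and thereby pinning down the denominators $s_d$ through the geometry of the special fibers — is the crux and the genuinely hard part; once all geometric fibers are shown to be reduced irreducible lines one obtains $s_d \sim s_1^{d}$, and the filtration argument above closes to give $A \cong k[f]^{[1]}$.
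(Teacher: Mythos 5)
The first thing to note is that the paper does not actually prove this lemma: its ``proof'' consists of the citation to Russell--Sathaye \cite[2.4]{RS79} (with $L=k$, $F=f$). So your attempt must be measured against that source, and against it your proposal is a correct reduction plus a restatement of the theorem rather than a proof. Your skeleton is the right one --- the filtration $A_d$ by $w$-degree, the leading-coefficient modules $I_d\subseteq k(f)$, the unit computation $A^{\times}=k^{\times}$ (which is correct), and the reduction of the lemma to $I_d=I_1^d$ --- but the proposal then stops exactly where the theorem begins: you yourself flag ``carrying this fiberwise analysis out rigorously'' as the crux and leave it as a program. Two concrete holes. First, principality of $I_d$ does not follow from $B=k[f]$ being a PID alone: a $B$-submodule of $k(f)$ such as $B[1/s]$ is not principal, so you need finite generation, i.e.\ bounded denominators in each degree, and that boundedness is part of what has to be proved. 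Second, the asserted equivalence between a strict inclusion $I_1^d\subsetneq I_d$ and a multiple or reducible fiber is never established, and the direction of the fiber analysis you propose to labor over is in fact the trivial one: if $p$ is any prime factor of $f-\lambda$ in $R=A\otimes_k\ol{k}$, then $p$ becomes a unit in $S^{-1}R=\ol{k}(f)[w]$, hence $p\in\ol{k}(f)^{\times}\cap R=\ol{k}[f]$ (granting condition (2) after base change, which itself needs a check), and triviality of units then forces $f-\lambda$ to stay prime; so integrality of all fibers is nearly immediate from (2), (3) and the shape of $S^{-1}R$.

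The genuinely hard implication is the converse you rely on: that integrality of every fiber pins down the denominators, i.e.\ yields $s_d\mid s_1^d$. Knowing $f-\lambda$ is prime in $R$ does not by itself let you divide through: from $a=cw^d+\cdots\in A$ with $t^m c\in B$ you cannot conclude $t^{m-1}a\in A$, since the lower-order coefficients need not cooperate. What is needed is the valuation-theoretic ``residual variable'' analysis of \cite[2.3]{RS79}: for each prime $t$ of $k[f]$ one chooses a variable $T_t$ with $A\subseteq B_{(t)}[T_t]$ (the delicate case in characteristic $p$ being an inseparable residue extension, where geometric factoriality is genuinely consumed), and then one glues these local variables into a single $T$ with $A=B[T]$. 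None of this appears in your sketch, so the proposal as it stands has a genuine gap at its central step --- it correctly identifies where the difficulty lies, but does not resolve it.
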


\begin{proof}
See \cite[2.4]{RS79}, where we consider $L$ and $F$ as $k$ and $f$, respectively.
\end{proof}

Let $A$ be a $k$-subalgebra of $k^{[n]}$ and assume that $A$ is factorially closed in $k^{[n]}$. We easily see that $A$ is a polynomial ring if $k$ is algebraically closed and $\trdeg_k A\in \{0,1,n\}$. In particular, in \cite[Theorem 1 (2)]{CGM15}, Chakraborty, Gurjar and Miyanishi proved that if $k$ is algebraically closed of characteristic zero, then every factorially closed subring of $k^{[3]}$ is a polynomial ring. By the argument of the proof of Theorem 3.3, we obtain the following result.

\begin{prop}
Let $k$ be an algebraically closed field of $p:= \ch (k) \geq 0$ and $B = k^{[3]}$ the polynomial ring in three variables over $k$. Let $A$ be a factorially closed $k$-subalgebra of $B$ with $\trdeg_k A = 2$. If $p = 0$ or $\ML(A) \not= A$, then $A \cong k^{[2]}$ as $k$-algebras. 
\end{prop}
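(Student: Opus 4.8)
The plan is to run the proof of Theorem 3.3 almost verbatim, isolating the two places where the retract hypothesis was actually used and replacing each by an argument valid for a factorially closed subalgebra over an algebraically closed field. If $p = 0$ the assertion is \cite[Theorem 1 (2)]{CGM15}, so I would immediately reduce to the case $p > 0$ and $\ML(A) \neq A$, fix a non-trivial $\sigma \in \EXP(A)$, and set $R = A^{\sigma}$.

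First I would record the elementary consequences of factorial closedness that substitute for Lemma 2.2 (3). If $0 \neq b \in Q(A) \cap B$, then writing $b = x/y$ with $x, y \in A \setminus \{0\}$ gives $by = x \in A$, so factorial closedness forces $b \in A$; hence $A = Q(A) \cap B$, i.e. $A$ is algebraically closed in $B$. The same closedness shows that the prime factorization in $B$ of any $0 \neq a \in A$ stays inside $A$ and that each such prime is prime in $A$, so $A$ is a UFD. I would also note that factorial closedness is transitive, so $R = A^{\sigma}$, being factorially closed in $A$, is factorially closed in $B$.

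The step that genuinely differs from Theorem 3.3 is finite generation: for a retract it came from Lemma 2.2 (2), which is unavailable here, and I expect this to be the main obstacle. The key new input is Zariski's finiteness theorem: since $B = k^{[3]}$ is a normal affine $k$-domain and $A = Q(A) \cap B$ with $\trdeg_k Q(A) = 2 \leq 2$, that theorem shows $A$ is a finitely generated (normal) $k$-domain. The transcendence degree being at most $2$ is exactly what makes the theorem applicable; this is the crux, and it is the place where one cannot simply quote the retract-based argument of Theorem 3.3.

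With finite generation secured, the three claims of the proof of Theorem 3.3 go through. For Claim 1, $\trdeg_k R = 1$ and $R$ is a UFD, so $R \cong k^{[1]}$ by \cite[Theorem III]{EZ70} / \cite[Theorem (4.1)]{AEH72}; alternatively, and more cleanly here, $R$ is factorially closed in $B$ of transcendence degree $1$, so $R \cong k^{[1]}$ by the remark preceding the proposition, which also avoids any separate finiteness check for $R$. Claim 2 (geometric factoriality) is now automatic: since $k$ is algebraically closed its only algebraic extension is $k$ itself, so $A \otimes_k k' = A$ is a UFD for every algebraic $k'/k$. Finally, writing $R = k[f]$, factorial closedness gives $k(f) \cap A = k[f]$, Lemma 2.3 gives $S^{-1}A \cong k(f)^{[1]}$ for $S = k[f] \setminus \{0\}$, and Lemma 3.4 (Russell--Sathaye) yields $A \cong k[f]^{[1]} \cong k^{[2]}$, exactly as in Claim 3.
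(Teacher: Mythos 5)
Your proposal is correct and follows the paper's own proof essentially step for step: both dispose of $p=0$ via \cite[Theorem 1 (2)]{CGM15}, obtain finite generation from $A = Q(A)\cap B$ together with Zariski's theorem \cite{Z54} (the exact substitute for Lemma 2.2 (2) that you identified as the crux), and then rerun Claims 1--3 of Theorem 3.3, with geometric factoriality immediate because $k$ is algebraically closed. Your spelled-out factorial-closedness arguments and the alternative derivation of $R \cong k^{[1]}$ from the remark preceding the proposition are only cosmetic additions to the same argument.
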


\begin{proof}
If $p = 0$, then $A \cong k^{[2]}$ by \cite[Theorem 1 (2)]{CGM15}. So we assume that $p > 0$ and $\ML(A) \not= A$. Since $A$ is algebraically closed in $B$, $A = Q(A) \cap B$. By  $\trdeg_k A = 2 (<3)$ and Zariski's theorem in \cite{Z54}, we know that $A$ is finitely generated over $k$. By $\ML(A) \not= A$, there exists a non-trivial exponential map $\sigma$ on $A$. Then the proof of Claim 1 in the proof of Theorem 3.3 works in our setting. So $R:= A^{\sigma} \cong k^{[1]}$. Since $A$ is factorially closed in $B$ and $B$ is a UFD, $A$ is a UFD. In particular, $A$ is geometrically factorial since $k$ is algebraically closed. Finally, we easily see that the proof of Claim 3 in the proof of Theorem 3.3 works in our setting. This proves $A \cong k^{[2]}$.
\end{proof}

\subsection{Retracts of $k^{[3]}$ with $\Z$-gradings}

In this subsection, we consider retracts having $\Z$-gradings. 
Let $A$ be a finitely generated $k$-domain and $X=\Spec A$. $\G_m$ denotes the $1$-dimensional algebraic torus over $k$. It is well known that a $\G_m$-action on $X$ is corresponding to a $\Z$-grading on $A$, that is, $A=\bigoplus_{d\in\Z}A_d$, where $A_d$ denotes the homogeneous elements of $A$ of degree $d\in\Z$. 
A $\G_m$-action on $X$ is said to be {\it effective} if $A\not=A_0$ with respect to the corresponding $\Z$-grading on $A$. $X$ is called an {\it affine $\G_m$-variety} if $X$ has an effective $\G_m$-action. In \cite{K23}, the first author proved that the following. 
\begin{thm} {\rm (\cite[Theorem 1.2]{K23})}\label{G_m-surf}
Let $k$ be an algebraically closed field of any characteristic and $S=\Spec A$ be an affine $\G_m$-surface over $k$. If $S$ is smooth and $A$ is a UFD with $A^*=k^*$, then $S\cong\A^2$. That is, $A\cong k^{[2]}$. 
\end{thm}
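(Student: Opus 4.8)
The plan is to classify the effective $\G_m$-action on $S=\Spec A$ according to the induced $\Z$-grading $A=\bigoplus_{d\in\Z}A_d$ and treat each type separately, using the hypotheses (UFD, $A^*=k^*$, smoothness) to pin down the quotient curve and then reconstruct $A$. First I would record some reductions. Since $A$ is a UFD it is normal and finitely generated over $k$, so the ring of invariants $A_0=A^{\G_m}$ is a normal finitely generated $k$-domain with $\trdeg_k A_0\in\{0,1\}$. Because $k$ is algebraically closed and $A$ is a domain, $\trdeg_k A_0=0$ forces $A_0=k$; in that case, if some $u\in A_d$ and $v\in A_{-d}$ with $d>0$ were both nonzero then $uv\in A_0=k$ would make $u$ a nonconstant unit, contradicting $A^*=k^*$, so the grading is one-sided, say $A=\bigoplus_{d\ge0}A_d$ (replacing the action by its inverse if necessary). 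Thus there are exactly two cases: the elliptic case $A_0=k$ with positive grading, and the parabolic/hyperbolic case $\trdeg_k A_0=1$.

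In the elliptic case the argument is short and uses only smoothness. Here $A_+:=\bigoplus_{d>0}A_d$ is the unique homogeneous maximal ideal, corresponding to the unique $\G_m$-fixed point $P\in S$. Smoothness of $S$ at $P$ gives $\dim_k A_+/A_+^2=\dim_P S=2$, so by the graded Nakayama lemma $A$ is generated as a $k$-algebra by two homogeneous elements $x,y$ lifting a basis of $A_+/A_+^2$. Then $A$ is a quotient of $k^{[2]}=k[X,Y]$, and since $\dim A=2=\dim k[X,Y]$ and $A$ is a domain the surjection $k[X,Y]\to A$ has a height-zero prime kernel, hence is an isomorphism, so $A\cong k^{[2]}$.

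The substance of the proof is the case $\trdeg_k A_0=1$, where $C:=\Spec A_0$ is a smooth affine curve and the quotient morphism $S\to C$ is a $\G_m$-fibration. The first goal is to show $C\cong\A^1$, i.e. $A_0\cong k^{[1]}$. The key technical input is to transfer triviality of the divisor class group from $A$ to $A_0$: using homogeneous localization and the behaviour of class groups of $\Z$-graded Krull domains under inverting a homogeneous element, I would deduce $\mathrm{Cl}(A_0)=0$ from $A$ being a UFD. Since the points of the Jacobian of the smooth completion $\bar C$ do not form a finitely generated group once the genus is positive, $\mathrm{Cl}(A_0)=0$ forces $\bar C\cong\BP^1$. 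Finally $A_0\subseteq A$ and $A^*=k^*$ give $A_0^*=k^*$, so $C$ has a single point at infinity; a genus-zero curve with one place at infinity is $\A^1$, whence $A_0\cong k^{[1]}$. I would then reconstruct $A$ via its Dolgachev--Pinkham--Demazure presentation over $C=\A^1$: smoothness of $S$ forces the defining $\Q$-divisor(s) to be integral and the relevant reflexive sheaves to be line bundles, and since every line bundle on $\A^1$ is trivial, together with $A^*=k^*$ and the UFD property the presentation collapses to $A\cong k[t][y]\cong k^{[2]}$ in the parabolic case, and to $A\cong k[u,v]$ with $uv$ a coordinate on $C$, again $\cong k^{[2]}$, in the hyperbolic case.

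The main obstacle is this last case, and within it two points deserve care in arbitrary characteristic. The first is the class-group transfer $\mathrm{Cl}(A)=0\Rightarrow\mathrm{Cl}(A_0)=0$: one must make the homogeneous-localization comparison precise and check that no subtlety arises from inseparable or non-reduced fibre phenomena of the $\G_m$-fibration when $\ch(k)=p>0$. The second is the DPD reconstruction: the characteristic-zero toolkit (locally nilpotent derivations, the absence of nontrivial forms of $k^{[2]}$) is unavailable here, so I would replace it by the characteristic-free theory of graded normal surfaces, invoking smoothness to exclude singular fibres and the UFD and unit hypotheses to rule out nontrivial twists. These are exactly the places where the positive-characteristic hypothesis makes the statement genuinely harder than its characteristic-zero counterpart.
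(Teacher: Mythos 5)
First, a framing point: the paper contains no proof of this statement at all --- Theorem 3.6 is imported verbatim from \cite[Theorem 1.2]{K23} and used as a black box in the proof of Theorem 3.7 --- so there is nothing internal to compare your argument against; it can only be judged on its own terms. Your skeleton (elliptic/parabolic/hyperbolic trichotomy for the $\Z$-grading, identification of the quotient curve with $\A^1$, reconstruction of $A$ over it) is the standard and surely correct frame, and your elliptic-case argument via graded Nakayama at the unique fixed point is complete. But the two steps you yourself flag as the crux are asserted via mechanisms that are false in the generality you invoke. The first is the class-group transfer: there is no formal implication ``$A$ graded UFD with $A^*=k^*$ $\Rightarrow$ $\operatorname{Cl}(A_0)=0$'' obtainable from homogeneous localization. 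Concretely, let $\G_m$ act on $k^{[4]}=k[x_1,x_2,y_1,y_2]$ with weights $(1,1,-1,-1)$; the ambient ring is a UFD with trivial units, yet the invariant ring $k[x_iy_j\,(1\le i,j\le 2)]\cong k[a,b,c,d]/(ad-bc)$ is not factorial. So the transfer must genuinely use $\trdeg_k A_0=1$ and the geometry of the quotient map: in the parabolic case $A_0\cong A/A_+$ is a retract of $A$, hence a UFD by the paper's own Lemma 2.2(3); in the hyperbolic case one has to work with the homogeneous prime factorizations of vertical fiber components (each is $V(f)$ for a homogeneous prime $f$, and one needs to produce degree-zero products of such primes over each point of $C$). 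None of this is in your sketch, and as written the step would fail.

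The second gap is the reconstruction. The claim ``smoothness of $S$ forces the defining $\Q$-divisor(s) to be integral'' is false in the hyperbolic case: $\A^2=\Spec k[u,v]$ with the effective hyperbolic action of weights $(a,-b)$, $\gcd(a,b)=1$, $a,b\ge 2$, is smooth and factorial with trivial units, yet $A_0=k[u^bv^a]$ and the associated Dolgachev--Pinkham--Demazure pair $(D_+,D_-)$ on $\A^1$ is genuinely fractional; your concluding description ``$A\cong k[u,v]$ with $uv$ a coordinate on $C$'' is likewise wrong for these actions, since $uv$ is not invariant. What smoothness actually yields is a local unimodularity condition on the fractional parts of $D_\pm$ at each point (the local models are toric, hence this criterion is characteristic-free), and one must then combine it with $\operatorname{Cl}(A)=0$ and $A^*=k^*$ to show the data is equivalent to that of a linear hyperbolic action on $\A^2$; this analysis is the real content of \cite[Theorem 1.2]{K23} and is missing here. (In the parabolic case your collapse does work: smoothness along the fixed curve forces $D$ integral --- a fractional coefficient produces a cyclic quotient singularity --- and then $\operatorname{Pic}(\A^1)=0$ gives $A\cong A_0^{[1]}$; but this, too, deserves the argument rather than the phrase.) In short: right roadmap, complete in the elliptic case, but with genuine gaps at exactly the two hyperbolic-case steps on which the theorem rests.
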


In \cite[Corollary 5.15]{CDDG21}, Chakraborty, Dasgupta, Dutta and Gupta proved that if $A$ is a graded subalgebra of $B\cong k^{[n]}$ with the standard grading and there exists a retraction $\pi:B\to A$ such that $\pi(B_+)\subseteq B_+$, then $A$ is a polynomial ring over $k$. The following theorem suggests that these assumptions are unnecessary when $n=3$. 

\begin{thm} \label{graded}
Let $k$ be an algebraically closed field of any characteristic and $A$ be a retract of $B\cong k^{[n]}$ for some $n\geq2$ with $\trdeg_k\:A=2$. If $A$ has a $\Z$-grading with $A_0\not=A$, then $A\cong k^{[2]}$. Therefore, every $\Z$-graded ring which is a retract of $k^{[3]}$ is a polynomial ring over $k$. 
\end{thm}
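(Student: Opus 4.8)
The plan is to apply Theorem~3.6 to $X := \Spec A$, so the whole proof reduces to checking that $X$ is a smooth affine $\G_m$-surface over $k$ whose coordinate ring $A$ is a UFD with $A^{*} = k^{*}$. First I would record that $A$ is a finitely generated $k$-domain: it is a $k$-subalgebra of the domain $B \cong k^{[n]}$, hence itself a domain, and since $B$ is finitely generated over $k$, Lemma~2.2~(2) shows that $A$ is finitely generated over $k$. As $A$ is a finitely generated $k$-domain with $\trdeg_k A = 2$, we have $\dim A = 2$, so $X$ is an affine surface.

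Next I would verify the remaining hypotheses of Theorem~3.6. The assumed $\Z$-grading $A = \bigoplus_{d \in \Z} A_d$ with $A_0 \neq A$ corresponds, as recalled above, to an effective $\G_m$-action on $X$, so $X$ is an affine $\G_m$-surface. Since $B \cong k^{[n]}$ is regular, Lemma~2.2~(4) gives that $A$ is regular, i.e.\ $X$ is smooth. Since $B$ is a UFD, Lemma~2.2~(3) gives that $A$ is a UFD. Finally $A^{*} = k^{*}$: any unit of $A$ is a unit of $B$, because $A \subseteq B$ and its inverse also lies in $A \subseteq B$, and $B^{*} = k^{*}$ since $B$ is a polynomial ring over a field; combined with the obvious inclusion $k^{*} \subseteq A^{*}$ this yields $A^{*} = k^{*}$.

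With every hypothesis of Theorem~3.6 verified, the theorem would yield $X \cong \A^2$, that is, $A \cong k^{[2]}$, proving the first assertion. For the final assertion, let $A$ be an arbitrary retract of $k^{[3]}$ carrying a $\Z$-grading with $A_0 \neq A$. By the results (1)--(3) recalled in the Introduction, $A$ is a polynomial ring whenever $\trdeg_k A \in \{0, 1, 3\}$, and the case $\trdeg_k A = 2$ is precisely the statement just established (with $n = 3$). Hence $A$ is a polynomial ring over $k$ in every case.

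The genuine difficulty of the statement is entirely absorbed into Theorem~3.6, the characterization of smooth factorial affine $\G_m$-surfaces with only constant units as $\A^2$ in arbitrary characteristic; granting that result, the argument above is merely a matter of transporting the retract hypotheses through Lemma~2.2. The one place where being a retract (rather than an arbitrary subalgebra) is essential is smoothness, supplied by Lemma~2.2~(4); finite generation and the UFD property come from the same lemma, while $A^{*} = k^{*}$ uses only the inclusion $A \subseteq B$ together with $B^{*} = k^{*}$.
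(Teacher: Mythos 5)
Your proof is correct and follows essentially the same route as the paper: reduce to Theorem~\ref{G_m-surf} by using Lemma~\ref{properties of retracts} to transfer finite generation, regularity, and the UFD property from $B$ to $A$, with the grading giving the effective $\G_m$-action. You are in fact slightly more careful than the paper, which attributes $A^{*}=k^{*}$ to Lemma~\ref{properties of retracts} without spelling out the unit argument $A^{*}\subseteq B^{*}=k^{*}$ that you give explicitly.
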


\begin{proof}
Let $S:=\Spec A$. By the assumption $S$ is an affine $\G_m$-surface. Since $A$ is a retract of $B\cong k^{[n]}$, Lemma \ref{properties of retracts} implies that A is a regular UFD with $A^*=k^*$. In particular, $S$ is smooth. Therefore, it follows from Theorem \ref{G_m-surf} that $A\cong k^{[2]}$. 
\end{proof}

\section{Examples of retracts of low-dimensional polynomial rings}

In this section, we give some examples of retracts of polynomial rings in two or three variables and we determine their generators. Additionally, we check whether each retract is a polynomial ring or not. 

Let $k$ be a field and $A$ a retract of $B=k^{[n]}=k[x_1, \ldots, x_n]$ for a positive integer $n$. 
It is clear that, if $n = 1$, then either $A=B$ or $A = k$.
If either $n=2$ or $n=3$ and $\ch(k) = 0$, then we know that $A$ is a polynomial ring over $k$. 
Furthermore, $\trdeg_k A =1$ (resp.\ $n=3$, $\ch(k) = 0$ and $\trdeg_k A = 2$), then $A=k[f]$ (resp.\ $A=k[f_1,f_2]$) for some $f \in B \setminus k$ (resp.\ $f_1,f_2 \in B \setminus k$). 
When $n = 2$ and $\ch(k) = 0$,  Spilrain--Yu \cite{SY00} gave a characterization of all the retracts of $B$ up to an automorphism, and gave several applications of this characterization to the $2$-dimensional Jacobian conjecture. However, it is difficult to determine the retracts of $B$ when $n \ge 2$. 

Let $\varphi$ be a retraction from $B$ to $A$.
Say, $A = \operatorname{Im} \varphi$, where $\varphi : B \to B$ is a $k$-algebra endomorphism of $B$ and $\varphi ^{2} = \varphi$.
We set $f_i=\varphi(x_i)$ for $i=1, \dots ,n$.
Since $\varphi$ is a $k$-algebra homomorphism, we have
$$
\varphi(g) = g(\varphi(x_1), \dots ,\varphi(x_n)) = g(f_1, \dots ,f_n)
$$
for any polynomial $g \in B$. 
In particular,
\begin{equation}
f_i=\varphi(x_i)=\varphi (\varphi(x_i)) = \varphi(f_i) =f_i(f_1, \dots ,f_n) \tag{4.1}
\end{equation}
for each $i=1, \dots ,n$. Furthermore, $A=k[f_1, \dots ,f_n]$.

Conversely, suppose $A=k[f_1, \dots ,f_n]$ is a $k$-algebra of $B$ generated by $n$ polynomials $f_1, \dots ,f_n$. 
We set the $k$-algebra homomorphism $\varphi : B \to B$ given by $\varphi(x_i)=f_i$ for each $i=1, \dots ,n$.
If the $f_i$'s satisfy (4.1), then $\varphi$ is a retraction.
Thus we have the following lemma.

\begin{lem}
With the same notations as above, $\varphi$ is a retraction if and only if the $f_i$'s satisfy {\rm (4.1)}.
\end{lem}

Now, we give some examples of retracts of polynomial rings by using Lemma 4.1. In this section, we consider the cases $n=2, 3$ only. 
We note that some of the results of this section can be generalized. However, we do not give such a generalization since even in the low dimensional case has not been fully understood.
We sometimes set $k^{[2]}=k[x,y]$ and $k^{[3]}=k[x,y,z]$.

In the following example, we determine the retracts of $B=k^{[n]} \ (n=2,3)$ generated by monomials. Here, we assume that a monomial is monic. 

\begin{example}
{\rm 
Let $f_1, \dots ,f_n$ be $n$ polynomials of $B=k^{[n]}$ that satisfy {\rm (4.1)} and let $A=k[f_1, \dots ,f_n]$.
Assume that $A \ne k$ and every $f_i \ (i=1, \dots ,n)$ is either $0$ or a monomial.
Then there exists a positive integer $r$ such that $1\leq r\leq n$  and $f_{m_1},\ldots,f_{m_r}$ are non-zero monomials and $f_{j}=0$ for $j\not\in\{m_1,\ldots,m_r\}$. For $1\leq i\leq r$, let $f_{m_i}=x_1^{a_{m_i1}}\cdots x_n^{a_{m_in}}$ be a monomial, where $a_{m_ij}\geq 0$. Then the $r\times r$ matrix $(a_{m_im_j})_{1\leq i,j,\leq r}$ is idempotent. Indeed, by (4.1), $a_{m_ij}=0$ when $f_j=0$, thus $f_{m_i}\in k[x_{m_1},\ldots,x_{m_r}]$. Therefore, (4.1) implies that $(a_{m_im_j})_{1\leq i,j,\leq r}$ is an idempotent matrix. By simple calculation, the following assertions hold.
\begin{enumerate}
\item[(1)]
If $n=2$, then $f_1$ and $f_2$ are as in the following table, where $m$ is a non-negative integer.

\begin{longtable}{cc|c}
$f_1$         &               $f_2$         &          $A=k[f_1,f_2]$ \\
\hline
$x$           &               $0$          &          $k[x]$ \\
$0$           &               $y$          &          $k[y]$ \\
$x$           &               $x^{m}$     &         $k[x]$ \\
$x$           &               $y$          &         $k[x,y]$ \\
$xy^{m}$    &               $1$          &         $k[xy^{m}]$ \\
$y^{m}$     &                $y$          &         $k[y]$ \\
$1$          &               $x^{m}y$    &        $k[x^{m}y]$ \\ 
\end{longtable}

\item[(2)]
If $n=3$, then $f_1$, $f_2$ and $f_3$ are as in the following table, where $m$ and $l$ are non-negative integers.
\vspace{1cm}

\begin{longtable}{ccc|c}
$f_1$          &          $f_2$             &  $f_3$          & $A=k[f_1,f_2,f_3]$ \\
\hline
$x$            &           $0$              &  $0$            &  $k[x]$ \\ 
$0$            &           $y$              &  $0$            &  $k[y]$ \\
$0$            &           $0$              &  $z$            &  $k[z]$ \\ 

$x$            &           $x^{m}$         &  $0$        &  $k[x]$ \\
$x$            &           $y$              &  $0$        &  $k[x,y]$ \\
$xy^{m}$      &          $1$              &  $0$        &  $k[xy^{m}]$ \\
$y^{m}$       &           $y$              &  $0$        &  $k[y]$ \\
$1$            &           $x^{m}y$       &  $0$        &  $k[x^{m}y]$ \\

$x$            &           $0$              &  $x^{m}$        &  $k[x]$ \\
$x$            &           $0$              &  $z$             &  $k[x,z]$ \\
$xz^{m}$     &           $0$              &  $1$            &  $k[xz^{m}]$ \\
$z^{m}$       &           $0$              &  $z$            &  $k[z]$ \\
$1$            &           $0$              &  $x^{m}z$      &  $k[x^{m}z]$ \\

$0$            &           $y$              &  $y^{m}$    &  $k[y]$ \\
$0$            &           $y$              &  $z$         &  $k[y,z]$ \\
$0$            &           $yz^{m}$       &  $1$         &  $k[yz^{m}]$ \\
$0$            &           $z^{m}$         &  $z$         &  $k[z]$ \\
$0$            &           $1$              &  $y^{m}z$   &  $k[y^{m}z]$ \\

$x$                &           $x^{l}$                &  $x^{m}$         &  $k[x]$ \\
$y^{l}$              &           $y$                 &  $y^{m}$          &  $k[y]$ \\
$z^{l}$            &           $z^{m}$              &  $z$                 &  $k[z]$ \\

$x$                &           $y$                   &  $x^{l}y^{m}$     &  $k[x,y]$ \\
$y^{l}z^{m}$       &           $y$                 &  $z$                &  $k[y,z]$ \\
$x$                 &           $x^{l}y^{m}$       &  $z$                &  $k[x,z]$ \\

$x$                 &           $y$                  &  $z$               &  $k[x,y,z]$ \\

$xy^{l}$           &           $1$                   &  $x^{m}y^{lm}$    &  $k[xy^{l}]$ \\
$xz^{l}$            &           $x^{m}z^{lm}$     &  $1$               &  $k[xz^{l}]$ \\
$y^{l}z^{lm}$      &           $yz^{m}$           &  $1$                &  $k[yz^{m}]$ \\
$y^{lm}z^{l}$      &           $1$                  &  $y^{m}z$         &  $k[y^{m}z]$ \\
$1$               &           $x^{l}y$               &  $x^{lm}y^{m}$    &  $k[x^{l}y]$ \\
$1$               &           $x^{lm}z^{l}$       &  $x^{m}z$          &  $k[x^{m}z]$ \\

$xy^{l}$           &           $1$                   &  $y^{m}z$        &  $k[xy^{l},y^{m}z]$ \\
$xz^{l}$            &           $yz^{m}$           &  $1$               &  $k[xz^{l},yz^{m}]$ \\
$1$               &           $x^{l}y$               &  $x^{m}z$          &  $k[x^{l}y, x^{m}z]$ \\

$xy^{l}z^{m}$     &           $1$                  &  $1$               &  $k[xy^{l}z^{m}]$ \\
$1$               &           $x^{l}yz^{m}$        &  $1$                 &  $k[x^{l}yz^{m}]$ \\
$1$               &           $1$                    &  $x^{l}y^{m}z$     &  $k[x^{l}y^{m}z]$ \\
\end{longtable}
\end{enumerate}
In these cases, $A$ is a polynomial ring.
}
\end{example} 

We give elementary results on sufficient conditions for a retract of $k^{[3]}$ to be a polynomial ring in Propositions 4.4 and 4.6. In their proofs, we use the following.

\begin{lem} 
Let $f_1, f_2 ,f_3$ be three polynomials of $B=k^{[3]} = k[x_1, x_2, x_3]$ and let $A=k[f_1, f_2 ,f_3]$. Assume that $f_1, f_2, f_3$ satisfy {\rm (4.1)}  and one of the following conditions. 
\begin{enumerate}
\item[{\rm (1)}]
$f_1,f_2,f_3$ are algebraically independent over $k$. 
\item[{\rm (2)}]
There exists $i\in\{1,2,3\}$ such that $f_i\in k$. 
\item[{\rm (3)}]
There exists $i\in\{1,2,3\}$ such that $f_i=x_i$. 
\end{enumerate}
Then $A$ is a polynomial ring. 
\end{lem}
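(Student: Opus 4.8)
The plan is to dispose of the three hypotheses one at a time, after recording the features common to all of them. In every case, condition (4.1) together with Lemma 4.1 shows that $A=k[f_1,f_2,f_3]$ is a retract of $B=k^{[3]}$; hence, by Lemma 2.2, $A$ is a UFD that is algebraically closed in $B$, and, exactly as in Claim 2 of the proof of Theorem 3.3, base-changing the retraction along any algebraic extension $k'/k$ shows that $A$ is geometrically factorial over $k$. The quantity that organizes the argument is $\trdeg_k A$.

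Conditions (1) and (2) I would settle by elementary means. If $f_1,f_2,f_3$ are algebraically independent over $k$, then the assignment $x_i\mapsto f_i$ defines an isomorphism of $k$-algebras $k^{[3]}\cong A$, so $A$ is a polynomial ring. If instead some $f_i\in k$, then $A$ is generated by the remaining two polynomials, so $\trdeg_k A\le 2$: when $\trdeg_k A=2$ those two generators are forced to be algebraically independent and $A\cong k^{[2]}$, when $\trdeg_k A=1$ we get $A\cong k^{[1]}$ from Costa's theorem \cite[Theorem 3.5]{C77}, and when $\trdeg_k A=0$ we have $A=k$.

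The heart of the lemma is condition (3). Suppose $f_i=x_i$; relabelling, I may assume $f_1=x_1$ and set $f:=x_1\in A$. Since $x_1\in A$ we have $\trdeg_k A\in\{1,2,3\}$; the values $1$ and $3$ reduce to the previous paragraph (for $\trdeg_k A=3$ the $f_i$ are independent, and for $\trdeg_k A=1$ we again invoke Costa's theorem), so the essential subcase is $\trdeg_k A=2$. Here I would exploit that the retraction $\varphi$ fixes $x_1$, hence restricts to the identity on $R:=k[f]$. Consequently $\varphi$ localizes, at $S:=k[f]\setminus\{0\}$, to a well-defined idempotent $k(f)$-endomorphism of $S^{-1}B=k(f)[x_2,x_3]\cong k(f)^{[2]}$, so that $S^{-1}A$ is a retract of $k(f)^{[2]}$. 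Because $\trdeg_{k(f)}S^{-1}A=\trdeg_k A-1=1$, Costa's theorem applied over the field $k(f)$ gives $S^{-1}A\cong k(f)^{[1]}$. This is hypothesis (1) of the Russell--Sathaye lemma (Lemma 3.4). Hypothesis (2) holds since $k(f)\cap A\subseteq k(f)\cap B=k[x_1]=k[f]\subseteq A$, and hypothesis (3) is the geometric factoriality recorded above. Lemma 3.4 then yields $A\cong k[f]^{[1]}\cong k^{[2]}$, completing the proof.

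I expect the only real obstacle to be the verification of hypothesis (1) of Lemma 3.4 in case (3), namely that the localization $S^{-1}A$ is a polynomial ring in one variable over $k(f)$. In contrast to the proof of Theorem 3.3, where the local slice came from a nontrivial exponential map with $A^{\sigma}=k[f]$, here there is no exponential map at hand; instead I produce the polynomial structure of $S^{-1}A$ directly from the relation $f_1=x_1$ and Costa's two-variable result over $k(f)$. The two points requiring care are that $S^{-1}\varphi$ is genuinely a well-defined idempotent endomorphism --- which rests solely on $\varphi$ fixing $k[x_1]$ pointwise --- and that the transcendence-degree count over $k(f)$ correctly produces $\trdeg_{k(f)}S^{-1}A=1$.
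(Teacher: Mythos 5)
Your proof is correct, and cases (1) and (2) run essentially as in the paper (the paper concludes $A=B$ in case (1) from the retract property, while you note directly that algebraically independent generators give $A\cong k^{[3]}$ --- an immaterial difference). In case (3), however, you take a genuinely different route. The paper's proof is a one-stroke reduction: writing $R=k[x_3]$ (for $f_3=x_3$), it observes that $\varphi$ is an $R$-algebra retraction of $B=R[x_1,x_2]\cong R^{[2]}$, and then invokes Costa's theorem \cite[Theorem 3.5]{C77} in its strong form --- for retracts of $R^{[2]}$ over a \emph{UFD} $R$ --- to get $A\cong R^{[e]}$, $0\le e\le 2$, hence $A\cong k^{[e+1]}$, uniformly in all transcendence degrees. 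You instead localize at $S=k[x_1]\setminus\{0\}$, apply Costa only over the \emph{field} $k(x_1)$ to get $S^{-1}A\cong k(x_1)^{[1]}$ in the essential subcase $\trdeg_k A=2$, and descend via Russell--Sathaye (Lemma 3.4), verifying $k(f)\cap A=k[f]$ directly from $k(f)\cap B=k[x_1]$ and geometric factoriality by the base-change argument of Claim 2 of Theorem 3.3; your verifications of these hypotheses, of the well-definedness and idempotency of $S^{-1}\varphi$ (which rests on $\varphi$ fixing $k[x_1]$ pointwise, so $S^{-1}A$ is the image of $S^{-1}\varphi$), and of $\trdeg_{k(f)}S^{-1}A=1$ are all sound. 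What each approach buys: the paper's is shorter but leans on the UFD-coefficient version of Costa's theorem; yours needs only the field case of that theorem, at the cost of splitting into transcendence-degree subcases and importing the heavier descent machinery of Lemma 3.4 --- in effect replaying the proof of Theorem 3.3 with the localized Costa result supplying the polynomial structure of $S^{-1}A$ in place of a local slice of an exponential map.
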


\begin{proof}
Suppose that the condition (1) holds. Then $\trdeg_kA=3$, hence $A=B=k^{[3]}$.

Suppose that the condition (2) holds. We may assume that $f_3\in k$. Then $A=k[f_1,f_2]$. If $f_1$ and $f_2$ are algebraically independent over $k$, then $\trdeg_k A = 2$ and hence $A\cong k^{[2]}$. Otherwise, since $\trdeg_kA \leq 1$, the assertion follows from \cite[Theorem 3.5]{C77}. 

Suppose that the condition (3) holds. We may assume that $f_3=x_3$. Let $R=k[x_3]$. Then $B=R[x_1,x_2]\cong R^{[2]}$, $A=R[f_1,f_2]$ and $\varphi$ is a homomorphism of $R$-algebras. Thus, $A$ is an $R$-algebra retract of $B\cong R^{[2]}$. Since $R$ is a UFD, it follows from \cite[Theorem 3.5]{C77} that $A\cong R^{[e]}$ for some $0\leq e\leq 2$. Therefore, $A\cong k^{[e+1]}$. 
\end{proof}

Here we prove the following.

\begin{prop}
Let $f_1, f_2 ,f_3$ be three polynomials of $B=k^{[3]}$ that satisfy {\rm (4.1)} and let $A=k[f_1, f_2 ,f_3]$.
Assume that $A \ne k$ and at least one of $f_1, f_2, f_3$ is a monomial. 
Then $A$ is a polynomial ring. 
\end{prop}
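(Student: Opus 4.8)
The plan is to force the triple $(f_1,f_2,f_3)$ into one of the three configurations already disposed of in Lemma 4.3, or else into a situation where $A$ is patently a retract of a polynomial ring in two variables. After a permutation of the variables (which replaces $\varphi$ by a conjugate retraction and $A$ by an isomorphic $k$-algebra) I may assume that $f_1$ is the monomial, and I write $f_1=x_1^{a_1}x_2^{a_2}x_3^{a_3}$ with $a_i\ge 0$. If $f_1\in k$, i.e.\ $a_1=a_2=a_3=0$, then Lemma 4.3 (2) already applies, so I assume $f_1$ is nonconstant. The whole argument is driven by relation {\rm (4.1)} for $i=1$, which reads
\begin{equation*}
f_1^{a_1}f_2^{a_2}f_3^{a_3}=f_1. \tag{$\ast$}
\end{equation*}
I would then distinguish the two cases $a_1\ge 1$ and $a_1=0$, according to whether or not $x_1$ divides $f_1$.

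Suppose first that $a_1\ge 1$. Comparing total degrees in $(\ast)$ and writing $d_i=\deg f_i$ (each $f_i$ occurring in $(\ast)$ being nonzero, else the left side vanishes), the equality $d_1=a_1+a_2+a_3\ge 1$ turns $(\ast)$ into $(a_1-1)d_1+a_2d_2+a_3d_3=0$, a sum of nonnegative terms. Hence $a_1=1$, and for $i\in\{2,3\}$ the condition $a_i\ge 1$ forces $d_i=0$, i.e.\ $f_i\in k$. If $a_2\ge 1$ or $a_3\ge 1$ this puts us in Lemma 4.3 (2); if $a_2=a_3=0$ then $f_1=x_1$ and Lemma 4.3 (3) finishes the case.

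Suppose now that $a_1=0$, so that $f_1=x_2^{a_2}x_3^{a_3}\in k[x_2,x_3]$ is nonconstant and $(\ast)$ becomes $f_2^{a_2}f_3^{a_3}=x_2^{a_2}x_3^{a_3}$. Since $B$ is a UFD whose only prime divisors of the right-hand side are $x_2$ and $x_3$, each $f_i$ with $a_i\ge 1$ is a unit multiple of a monomial in $x_2,x_3$, hence lies in $k[x_2,x_3]$. If exactly one of $a_2,a_3$ is positive, say $f_1=x_2^{a_2}$, then $(\ast)$ reads $f_2^{a_2}=x_2^{a_2}$, so $f_2=\zeta x_2$ for some $\zeta\in k^{*}$; substituting this into {\rm (4.1)} for $i=2$ gives $\zeta f_2=f_2$, whence $\zeta=1$, $f_2=x_2$, and Lemma 4.3 (3) applies.

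The remaining case, $a_1=0$ with both $a_2,a_3\ge 1$, is the one I expect to be the main obstacle, since none of the alternatives of Lemma 4.3 is directly at hand. Here all three of $f_1,f_2,f_3$ lie in $C:=k[x_2,x_3]$, so $\varphi$ restricts to an idempotent $k$-endomorphism $\psi$ of $C$ with $\operatorname{Im}\psi=k[f_2,f_3]$. Because $f_1\in A=\operatorname{Im}\varphi$ we have $\varphi(f_1)=f_1$, while $f_1\in C$ gives $\varphi(f_1)=\psi(f_1)\in\operatorname{Im}\psi=k[f_2,f_3]$; therefore $f_1\in k[f_2,f_3]$ and $A=k[f_1,f_2,f_3]=k[f_2,f_3]$. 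Thus $A$ is a retract of $C\cong k^{[2]}$, and by Costa \cite[Theorem 3.5]{C77} every retract of $k^{[2]}$ is a polynomial ring, completing all cases. This last step is where the monomial hypothesis is genuinely converted, via idempotency, into a reduction in the number of variables, and is the part requiring the most care.
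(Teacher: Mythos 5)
Your proof is correct, but it follows a genuinely different route from the paper's. The paper's proof writes $f_1=x^py^qz^r$ and splits according to whether $f_2$, $f_3$ are monomials: when $f_2$ (say) is not a monomial, the observation that $f_1(f_1,f_2,f_3)=f_1^pf_2^qf_3^r$ cannot be a monomial once $q\geq 1$ forces $q=0$ (similarly $r=0$), whence $f_1=x^p$ and idempotency gives $p=1$, $f_1=x$; the mixed case ($f_2$ a monomial, $f_3$ not) is settled by quoting the classification of monomial pairs from Example 4.2 (1), the case where both $f_2,f_3$ are monomials being implicitly left to Example 4.2 (2); everything then funnels into Lemma 4.3, as in your argument. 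You instead split on the exponent $a_1$ of $x_1$ in $f_1$: your total-degree count in $(\ast)$ disposes of $a_1\geq 1$ at once (forcing $a_1=1$ and either $f_1=x_1$ or some $f_i\in k$), and when $a_1=0$ you use unique factorization to place $f_1,f_2,f_3$ in $C=k[x_2,x_3]$ and then restrict the retraction to $C$; the key verification that $\psi:=\varphi|_C$ is idempotent, via $\psi(f_i)=\varphi(f_i)=f_i$ for $i=2,3$, is correct, and Costa's theorem \cite[Theorem 3.5]{C77} finishes. Two remarks: (i) in that last case the containment $f_1\in k[f_2,f_3]$ is free, since $(\ast)$ with $a_1=0$ literally reads $f_1=f_2^{a_2}f_3^{a_3}$, so your idempotency detour for this particular point is unnecessary (though $\psi$ is still needed to exhibit $A$ as a retract of $C$); (ii) your treatment never asks whether $f_2,f_3$ are monomials, so it bypasses Example 4.2 entirely and covers the all-monomial configuration uniformly, which the paper's proof as written defers to the example's table. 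The paper's route buys economy by reusing the already-computed classification; yours buys self-containedness, a cleaner case split, and a reduction mechanism --- restricting a retraction to a $\varphi$-stable coordinate subring to drop the number of variables --- that would generalize readily beyond $n=3$.
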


\begin{proof}
We may assume that $f_1$ is a monomial. 
If at least one of $f_1$, $f_2$, $f_3$ is an element of $k$, then the assertion follows from Lemma 4.3. So we may assume that $f_1$, $f_2$, $f_3$ are non-constant.
By the assumption, we set $f_1 = x^{p}y^{q}z^{r}$, where $p$,$q$,$r$ are non-negative integers and $(p,q,r) \ne (0,0,0)$.

Assume that neither $f_2$ nor $f_3$ is a monomial. 
If $q \geq 1$, $f_1(f_1,f_2,f_3)$ is not a monomial because $f_2$ is not a monomial.
So we have $f_1(f_1,f_2, f_3) \ne f_1$, which is a contradiction.
Similarly, we derive a contradiction if $r \geq 1$. 
So $q=0$ and $r=0$.
Then $f_1=x^{p}$ and $f_1(f_1,f_2,f_3)=x^{p^{2}}$.
Since $f_1$ satisfies the condition (4.1), we have $p=1$ and $f_1=x$.
By Lemma 4.3, $A$ is a polynomial ring. 

Assume that $f_2$ is a monomial and $f_3$ is not a monomial.
We set $f_2 = x^{s}y^{t}z^{u}$, where $s$,$t$,$u$ are non-negative integers and $(s,t,u) \ne (0,0,0)$.
By the same argument as above, we have $r=u=0$.
Then we have $(f_1,f_2) =(x,x^{m}), (x,y),(y^{m},y)$ by Example 4.2 (1).
We know that $A$ is a polynomial ring by Lemma 4.3.
\end{proof}

In Proposition 4.4, we can determine the $f_1, f_2, f_3$ under some assumptions. See the list of Example 4.2 (2), where every $f_i $ ($i = 1,2,3$) is either $0$ or a monomial. In fact, we have the following.

\begin{example}
{\rm 
Let $f_1, \dots ,f_n \in B \setminus k$ be $n$ polynomials of $B=k^{[n]}$ that satisfy {\rm (4.1)} and let $A=k[f_1, \dots ,f_n]$.
Assume further that the following conditions hold:
\begin{enumerate}
\item[(i)]
The constant term of $f_i$ is equal to $0$ for each $i=1, \dots ,n$.
\item[(ii)]
If $f_i \neq 0$, then its leading coefficient with respect to the lexicographical monomial order with $x > y > z$ is equal to $1$.
\end{enumerate}
In fact, for every retract of $B$, we can take its generator satisfying the conditions (i) and (ii) as above. 
By simple computation,  we have:

\begin{enumerate}
\item[(1)]
In case $n=2$, we assume that $f_1$ is a monomial and $f_2$ is not a monomial.
Then $f_1$ and $f_2$ are as in the following table.

\begin{longtable}{cc|c}
$f_1$         &               $f_2$         &          $A=k[f_1,f_2]$ \\
\hline
$x$           &               $f_2 \in k[x]$          &      $k[x]$ \\
\end{longtable}

\item[ (2)]
In case $n=3$, we assume both $f_1$ and $f_2$ are monomials and $f_3$ is not a monomial.
Then $f_1$, $f_2$ and $f_3$ are as in the following table, where $m$ is a non-negative integer and $g$ is an element of $B$.

\begin{longtable}{
ccc|c}
$f_1$                       &          $f_2$               &  $f_3$          & $A=k[f_1,f_2,f_3]$ \\
\hline
$x$            &           $x^{m}$             &  $f_3 \in k[x]$   &  $k[x]$ \\ 
$x$            &           $x^{m}$          &     $(x^{m}-y)g+z$   &  $k[x,f_3]$ \\
$x$            &           $y$              &     $f_3 \in k[x,y]$   &  $k[x,y]$ \\
$y^{m}$            &     $y$              &     $f_3 \in k[y]$     &  $k[y]$ \\
$y^{m}$            &      $y$     &            $(x-y^{m})g+z$            &  $k[y,f_3]$ \\ 
\end{longtable}
\end{enumerate}
}
\end{example}

In Proposition 4.4, we have not determined the $f_1, f_2, f_3$ when only one of them is a monomial. Of course, there are many such examples. 

Finally, we give the following result.

\begin{prop}
Let $f_1, f_2 ,f_3$ be three polynomials of $B=k^{[3]} = k[x_1, x_2, x_3]$ that satisfy {\rm (4.1)} and let $A=k[f_1, f_2 ,f_3]$.
Assume that $A \ne k$ and  $f_1, f_2 ,f_3$ satisfy the conditions {\rm (i)} and {\rm (ii)} in Example {\rm 4.5}.
If there exists $i \in \{1, 2 ,3 \}$ such that $f_i$ is a binomial and $x_i$ is a term of $f_i$, then $A$ is a polynomial ring.
\end{prop}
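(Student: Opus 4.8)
Without loss of generality, suppose $i = 1$, so $f_1$ is a binomial of the form $f_1 = x_1 + c\,w$ where $c \in k^{\times}$ and $w$ is a monomial in $x_1, x_2, x_3$ distinct from $x_1$ (by conditions (i) and (ii), both terms have constant coefficient zero and $x_1$ appears with coefficient $1$). My plan is to exploit the relation (4.1) for $f_1$, namely $f_1 = f_1(f_1, f_2, f_3) = f_1 + c\,w(f_1, f_2, f_3)$, which forces $w(f_1, f_2, f_3) = w$. First I would analyze the monomial $w = x_1^{a} x_2^{b} x_3^{d}$ together with this constraint to pin down which variables can actually occur in $w$ and with what exponents, just as in the proof of Proposition 4.4.

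\textbf{Extracting structural information from (4.1).} The heart of the argument is that $w(f_1, f_2, f_3) = w$ is a strong rigidity condition. Writing out $w(f_1, f_2, f_3) = (x_1 + cw)^{a} f_2^{b} f_3^{d}$ and comparing with $w = x_1^{a} x_2^{b} x_3^{d}$, I would argue that the exponent $a$ on $x_1$ must be zero: if $a \geq 1$, then $(x_1 + cw)^a$ contributes extra terms, and one cannot recover the single monomial $w$ on the right unless these cancel, which I expect to rule out by a degree or leading-term comparison (using the lexicographic order of condition (ii)). Thus $w$ involves only $x_2, x_3$, giving $f_1 = x_1 + c\,x_2^{b} x_3^{d}$ with $(b,d) \neq (0,0)$, and the relation becomes $x_2^{b} x_3^{d} = f_2^{b} f_3^{d}$.

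\textbf{Reducing to an earlier result.} Once $f_1 = x_1 + c\,x_2^{b}x_3^{d}$ is established, I would perform the triangular change of coordinates on $B = k[x_1, x_2, x_3]$ sending $x_1 \mapsto x_1 - c\,x_2^{b}x_3^{d}$, which is an automorphism of $B$ fixing $x_2, x_3$; under it $f_1$ becomes the coordinate $x_1$. Since applying a $k$-automorphism of $B$ to a retract yields another retract with conjugate defining data, after this change the new $f_1$ equals $x_1$, and the transformed generators still satisfy (4.1). Now Lemma 4.3 (3) applies directly: there is an index (namely $1$) with $f_1 = x_1$, so $A$ is a polynomial ring. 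This is the cleanest route; an alternative is to invoke Lemma 4.3 (3) after only the coordinate change, without needing to solve $x_2^{b}x_3^{d} = f_2^{b}f_3^{d}$ for $f_2, f_3$ explicitly.

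\textbf{Main obstacle.} The step I expect to be delicate is ruling out $a \geq 1$ in the monomial $w$, i.e.\ showing $x_1$ cannot genuinely occur in the non-$x_1$ term of the binomial $f_1$. The subtlety is that in positive characteristic the expansion $(x_1 + cw)^a$ can collapse (Frobenius-type cancellations when $p \mid$ binomial coefficients), so a naive ``extra terms don't cancel'' argument is not automatic; I would instead compare leading monomials under the lexicographic order with $x_1 > x_2 > x_3$, noting that the leading term of $(x_1+cw)^a f_2^b f_3^d$ is governed by $x_1^a$ times leading terms of $f_2, f_3$, and match total degrees to force $a = 0$. Handling this comparison carefully, uniformly in $\operatorname{char}(k)$, is the crux; everything after the coordinate change is routine via Lemma 4.3.
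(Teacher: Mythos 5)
Your proposal contains a genuine error at its pivotal step, and everything downstream depends on it. Writing $f_1 = x_1 + c\,w$ with $w = x_1^{a}x_2^{b}x_3^{d}$, the relation (4.1) gives $f_1 = f_1(f_1,f_2,f_3) = f_1 + c\,w(f_1,f_2,f_3)$, and subtracting $f_1$ from both sides forces $c\,w(f_1,f_2,f_3)=0$, i.e.\ $f_1^{a}f_2^{b}f_3^{d}=0$ --- \emph{not} $w(f_1,f_2,f_3)=w$ as you claim (you appear to have matched the $x_1$-term of $f_1$ against the first summand of $f_1(f_1,f_2,f_3)$, in effect substituting $x_1$ rather than $f_1$ into the first slot). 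The correct equation trivializes the problem and is exactly the paper's proof: if $b=d=0$, then $f_1 = x_1 + c x_1^{a}$ with $a\geq 2$ (by condition (i) and binomiality), and $c f_1^{a}=0$ contradicts $f_1\neq 0$; hence $(b,d)\neq(0,0)$, and since $B$ is a domain and $f_1\neq 0$, the identity $f_1^{a}f_2^{b}f_3^{d}=0$ forces $f_2=0$ or $f_3=0$. Then Lemma 4.3 (2) (some $f_i\in k$) concludes that $A$ is a polynomial ring. In particular, the ``main obstacle'' you flag --- ruling out $a\geq 1$ via leading-term comparisons, with worries about Frobenius collapse in characteristic $p$ --- is an artifact of the false equation; the correct argument needs no such analysis and is uniform in $\ch(k)$.

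There is also a secondary flaw: even granting your (false) relation $x_2^{b}x_3^{d}=f_2^{b}f_3^{d}$, the reduction to Lemma 4.3 (3) does not go through as stated. Conjugating the retraction $\varphi$ by the triangular automorphism $\theta$ with $\theta(x_1)=x_1-c\,x_2^{b}x_3^{d}$ produces new generators $f_i'$, and a direct computation (in either conjugation direction, $\theta^{-1}\varphi\theta$ or $\theta\varphi\theta^{-1}$) yields $f_1'=x_1+c\,x_2^{b}x_3^{d}=f_1$, not $x_1$; Lemma 4.3 is stated for generators satisfying (4.1) in the given coordinates, so the assertion ``after this change the new $f_1$ equals $x_1$'' requires this bookkeeping and it fails here. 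Once the miscomputation is repaired, however, all of this machinery is unnecessary: the conclusion $f_2=0$ or $f_3=0$ feeds directly into Lemma 4.3 (2).
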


\begin{proof}
We may assume that $f_1$ is a binomial and $x_1$ is a term of $f_1$.
We may set $f_1=x_1 + \alpha x_1^{p}x_2^{q}x_3^{r}$, where $\alpha \in k \setminus \{0\}$, $p,q,r \in \Z_{\ge 0}$ and $(p,q,r) \ne (0,0,0)$.

If $q=0$ and $r=0$, then $f_1=x_1+\alpha x_1^{p}$.
Since $f_1$ is a binomial and the constant term of $f_1$ is equal to $0$ by the condition (i) in Example 4.5, we have $p \ge 2$.
So we know that $f_1(f_1,f_2,f_3)=f_1+\alpha f_1^{p}$.
By (4.1), we have $\alpha f_1^{p} =0$, which is a contradiction.
Therefore, we have $q \ne 0$ or $r \ne 0$.

Now we know that $f_1(f_1,f_2,f_3)=f_1+\alpha f_1^{p}f_2^{q}f_3^{r}$.
By the same argument as above, we have $\alpha f_1^{p}f_2^{q}f_3^{r} =0$.
Since $\alpha$ and $f_1$ are not equal to $0$ and $q \ne 0$ or $r \ne 0$, we have $f_2=0$ or $f_3=0$.
Thus we know that $A$ is a polynomial ring by Lemma 4.3.
\end{proof}


\end{document}